\documentclass{article}

\usepackage[margin=2cm]{geometry}
\usepackage{amsmath,amssymb}
\usepackage{amsthm}
\usepackage{mathrsfs,MnSymbol}
\usepackage{cite}
\usepackage{enumerate}
\usepackage{enumitem}
\usepackage{url}
\usepackage[colorlinks]{hyperref}
\usepackage{algorithm}
\usepackage{algpseudocode}
\usepackage[dvipsnames]{xcolor}
\usepackage{color}
\usepackage{listings}
\usepackage{tikz}
\tikzset{
every node/.style={circle, draw, inner sep=2pt},
every picture/.style={thick}
}
\usetikzlibrary{matrix,arrows,calc}

\newtheorem{theorem}{Theorem}
\newtheorem{lemma}[theorem]{Lemma}
\newtheorem{proposition}[theorem]{Proposition}
\newtheorem{corollary}[theorem]{Corollary}

\theoremstyle{definition}
\newtheorem{definition}[theorem]{Definition}
\newtheorem{observation}[theorem]{Observation}
\newtheorem{remark}[theorem]{Remark}
\newtheorem{example}[theorem]{Example}

\newtheorem{question}[theorem]{Question}
\newtheorem{problem}[theorem]{Problem}

\newenvironment{thm}{\begin{theorem}}{\end{theorem}}

\newenvironment{obs}{\begin{observation}\bgroup\rm }{\egroup\end{observation}}

\newcommand{\Gaa}{\sf P_5}
\newcommand{\Gab}{\sf G_2}
\newcommand{\Gac}{\sf G_3}
\newcommand{\Gad}{\sf G_{4}}
\newcommand{\Gae}{\sf G_{5}}
\newcommand{\Gaf}{\sf G_{6}}
\newcommand{\Gag}{\sf G_{7}}
\newcommand{\Gah}{\sf G_8}
\newcommand{\Gai}{\sf G_9}
\newcommand{\Gaj}{\sf G_{10}}
\newcommand{\Gak}{\sf G_{11}}
\newcommand{\Gal}{\sf G_{12}}
\newcommand{\Gam}{\sf G_{13}}
\newcommand{\Gan}{\sf G_{14}}
\newcommand{\Gao}{\sf G_{15}}
\newcommand{\Gap}{\sf G_{16}}
\newcommand{\Gaq}{\sf G_{17}}
\newcommand{\Gar}{\sf G_{18}}
\newcommand{\Gas}{\sf G_{19}}
\newcommand{\Gat}{\sf G_{20}}
\newcommand{\Gau}{\sf G_{21}}
\newcommand{\Gav}{\sf G_{22}}
\newcommand{\Gaw}{\sf G_{23}}
\newcommand{\Gax}{\sf G_{24}}
\newcommand{\Gay}{\sf G_{25}}

\newcommand{\mz}{\operatorname{mz}}

\newcommand{\Forbmz}[1]{\mathbf{Forb}_{\mz\leq #1}}

\title{The forbidden structure for zero forcing number}

\author{
Carlos A. Alfaro\footnote{Banco de M\'exico, M\'exico City, M\'exico, \texttt{alfaromontufar@gmail.com}} \and
Michael D. Barrus \footnote{Department of Mathematics, University of Rhode, Island Kingston, RI 02881, USA, \texttt{barrus@uri.edu}} \and
Sergio Gerardo G\'omez-Galicia\footnote{ Departamento de Matemáticas, CINVESTAV, M\'exico City, M\'exico, \texttt{sgomez@math.cinvestav.mx}} \and
Teresa I. Hoekstra-Mendoza\footnote{Instituto de Matem\'aticas, UNAM, Unidad Juriquilla, Quer\'etaro, M\'exico, \texttt{allizdog01@gmail.com}} \and
Miguel Licona\footnote{Escuela Nacional de Estudios Superiores, UNAM, Juriquilla, Quer\'etaro, M\'exico, \texttt{eliconav23@xanum.uam.mx} } \and
Jephian C.-H. Lin \footnote{Department of Applied Mathematics, National Yang Ming Chiao Tung University, Hsinchu, Taiwan, \texttt{jephianlin@gmail.com}} \and
Juan Pablo Serrano\footnote{Departamento de Matemáticas, CINVESTAV, M\'exico City, M\'exico, \texttt{jpserranop@math.cinvestav.mx}} \and 
Ralihe R. Villagr\'an \footnote{Department of Mathematical Sciences, Worcester Polytechnic Institute, Worcester, USA, \texttt{ralihemath@gmail.com}} }

\date{}

\begin{document}
\maketitle

\begin{abstract}
%To be added later... but it is something like:
%Let $G$ be a graph with $n$ vertices.
%The zero forcing number $Z(G)$ of $G$ is a well studied parameter.
%We are interested in the parameter $\mz(G)$ that is equal to $n-Z(G)$.
%Since $\mz(G)$ is monotone on induced subgraphs, then, for any $k\geq 1$, there exist graphs that are forbidden for the family of graphs with $\mz\leq k$.
%We prove that the minimal forbidden graphs for the graphs with $\mz\leq k$ is finite for any $k\geq 1$.
%And {\color{red} (in progress)} give a characterization of the graphs with $\mz\leq 3$.

The {\it zero forcing number} of a graph $G$, $Z(G)$, is a well-studied parameter which arises from a color changing process and has strong connections to {\it minimum rank}, {\it critical ideals} and related invariants. 
In this work, we consider the complementary parameter $\mz(G) = |V(G)| - Z(G)$. 
This parameter is monotone under taking induced subgraphs. 
This leads us to the study of graphs for which $\mz(G)$ is bounded, via forbidden induced subgraphs.
We prove that the number of minimal forbidden graphs for graphs with $\mz(G)\leq k$ is finite for any $k\geq 1$. 
We determine the complete set of minimal forbidden graphs for the case $k = 3$, and we provide partial characterizations of graphs with $\mz(G) \leq 3$, based on girth. 
Our results suggest new directions for the structural understanding of zero forcing-type parameters.

%{\color{blue}We might try ``Inventiones mathematicae'', ``Journal of Inequalities and Applications'', ``Bulletin of Mathematical Sciences'' or ``Networks''}
\end{abstract}

This paper is dedicated to the memory of our friend Michael D. Barrus. 

\section{Introduction}

The \emph{zero forcing game} is a color-change game where vertices can be colored either blue or white.
At the beginning, the player can pick a set of vertices $B$ and color them blue while others remain white.
The goal is to color all vertices blue through repeated applications of the \emph{color change rule}: If $x$ is a blue vertex and $y$ is the only white neighbor of $x$, then $y$ turns blue, denoted as $x\rightarrow y$.
Such coloring process will be known as \emph{zero forcing process}.
%The \emph{chronological list} of a zero forcing game records the forces $x_i\rightarrow y_i$ in the order of performance.
An initial set of blue vertices $B$ is called a \emph{zero forcing set} if starting with $B$ one can make all vertices blue after a zero forcing process.
The \emph{zero forcing number}, denoted $Z(G)$, is the minimum cardinality of a zero forcing set.

In this work, we study the complementary parameter $\mz(G)=|V(G)|-Z(G)$.
Unlike $Z(G)$, it is known that $\mz(G)$ is monotone on induced subgraphs, that is, if $H$ is an induced subgraph of $G$, then $\mz(H)\leq \mz(G)$ (see \cite{AIMZmr}). 
Then, it is natural to ask for classifications of graphs where these parameters are bounded from above.
Moreover, it is known that $\mz(G)$ is a lower bound for certain graph parameters such as the minimum rank \cite{AIMZmr,cancun} and the algebraic co-rank \cite{alfarolin}.
Therefore, the classification of graphs with $\mz(G)$ at most $k$ might shed some light on the classifications of graphs where these parameters are bounded.

Given a family of graphs $\mathfrak{F}$, a graph $G$ is called $\mathfrak{F}$-{\it free} if no induced subgraph of $G$ is isomorphic to a member of $\mathfrak{F}$. We say a graph $H$ is \textit{forbidden} for graphs with $\mz(G)\leq k$, when $\mz(H)\geq k+1$. We have the following result.

\begin{lemma}
The path $P_k$ with $k$ vertices is forbidden for graphs with $\mz(G)\leq k-2$.
\end{lemma}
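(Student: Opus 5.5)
We need $\mz(P_k) \ge k-1$, which amounts to showing $Z(P_k) \le 1$. In fact $Z(P_k)=1$ for $k\ge 1$, so $\mz(P_k)=k-1\geq (k-2)+1$. By the definition of forbidden, this says exactly that $P_k$ is forbidden for graphs with $\mz(G)\le k-2$.

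The plan is to exhibit an explicit zero forcing set of size one. Label the path $v_1v_2\cdots v_k$ and take $B=\{v_1\}$. We show by induction on $i$ that after $i-1$ applications of the color change rule the blue vertices are exactly $v_1,\dots,v_i$.

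\begin{itemize}
\item For the base case $i=1$ this is just the choice of $B$.
\item For the inductive step with $i<k$, look at the neighbors of $v_i$. They are $v_{i-1}$ (when $i\ge 2$), which is already blue, and $v_{i+1}$, which is white. So $v_{i+1}$ is the unique white neighbor of the blue vertex $v_i$, and the force $v_i\rightarrow v_{i+1}$ is legal.
\end{itemize}

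After $k-1$ forces every vertex is blue. Hence $B$ is a zero forcing set and $Z(P_k)\le 1$.

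For the matching lower bound, note that the empty set forces nothing when $k\ge1$, since no vertex is blue. So $Z(P_k)=1$ and $\mz(P_k)=k-1$. Only the upper bound $Z(P_k)\le1$ is needed for the statement; the equality is recorded for completeness.

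There is no real obstacle here. The only point needing care is the endpoint behavior in the induction, namely that $v_1$ has the single neighbor $v_2$. The degenerate cases $k\le 2$ are trivial: the bound $\mz(G)\le k-2\le 0$ is satisfied by very few graphs, and the same computation still gives $\mz(P_k)=k-1>k-2$.
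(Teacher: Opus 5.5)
Your proof is correct and follows the same route as the paper, which simply notes that $Z(P_k)=1$ and hence $\mz(P_k)=k-1$. You add the explicit verification of $Z(P_k)\le 1$ by forcing along the path from an endpoint, and you rightly observe that only this upper bound is needed.
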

\begin{proof}
It follows since $Z(P_k)=1$, and then $\mz(P_k)=k-1$.
\end{proof}

Previously, it was noticed that, among connected graphs, complete graphs are the unique graphs with $\mz(G)=1$.

\begin{thm}
\label{lemma:mzandmrandgamma=1}
    Let $G$ be a connected graph.
    Then, the following are equivalent:
    \begin{enumerate}[label={\rm (\arabic*)}]
        \item $G$ is a complete graph,
        \item $G$ is $P_3$-free,
        \item $\mz(G)\leq 1$.
    \end{enumerate}
\end{thm}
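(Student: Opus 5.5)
The plan is to prove the cycle of implications (1)$\Rightarrow$(3)$\Rightarrow$(2)$\Rightarrow$(1), using the monotonicity of $\mz$ under induced subgraphs and the preceding lemma on paths.

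For (1)$\Rightarrow$(3), take $G=K_n$. Any $n-1$ vertices form a zero forcing set, since each blue vertex then has exactly one white neighbor, which it forces. Conversely, a set of at most $n-2$ blue vertices cannot start any force: if $B\neq\emptyset$, each blue vertex is adjacent to all of the at least two white vertices, and if $B=\emptyset$ nothing happens at all. Hence $Z(K_n)=n-1$ and $\mz(K_n)=1$. The degenerate case $n=1$ gives $Z=0$ and $\mz=1$, so the bound $\mz\le 1$ still holds.

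For (3)$\Rightarrow$(2), argue by contrapositive. If $G$ contains an induced $P_3$, then by the preceding lemma $\mz(P_3)=2$. Monotonicity of $\mz$ on induced subgraphs, as cited from \cite{AIMZmr}, then gives $\mz(G)\ge 2$.

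For (2)$\Rightarrow$(1), suppose $G$ is connected but not complete. Pick two nonadjacent vertices $u,w$ at minimum distance; by connectivity this distance is exactly $2$. A shortest path $u,v,w$ then induces $P_3$, because $u$ and $w$ are nonadjacent. This is the only place connectivity is used, and it is genuinely needed: for instance $2K_1$ is $P_3$-free but not complete.

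None of the steps is a real obstacle. The only points requiring care are the zero forcing computation for $K_n$ and checking that the cited monotonicity is applied to induced subgraphs, which is exactly the hypothesis available here.
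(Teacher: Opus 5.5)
The paper gives no proof of this theorem; it quotes it as a previously observed fact. Your cycle (1)$\Rightarrow$(3)$\Rightarrow$(2)$\Rightarrow$(1) is correct and uses exactly the tools the paper sets up just before it: $\mz(P_k)=k-1$ and monotonicity of $\mz$ under induced subgraphs.

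Two small points. First, under the paper's definition the empty set is not a zero forcing set of $K_1$, since with no blue vertex no force can occur. Hence $Z(K_1)=1$ and $\mz(K_1)=0$, not $Z=0$ and $\mz=1$ as you wrote. This slip is harmless, because $\mz(K_1)\le 1$ either way.

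Second, in (2)$\Rightarrow$(1), connectivity alone does not make the minimum distance between nonadjacent vertices exactly $2$; it only makes that distance finite. The value $2$ comes from taking the first three vertices of a shortest path between any nonadjacent pair. Those three vertices already induce the required $P_3$, so you can skip the minimization entirely.
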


\begin{figure}[h]
\begin{center}
\begin{tabular}{c@{\extracolsep{10mm}}c@{\extracolsep{10mm}}c@{\extracolsep{10mm}}c@{\extracolsep{10mm}}c}
\begin{tikzpicture}[rotate=-90,scale=.5]
	\tikzstyle{every node}=[minimum width=0pt, inner sep=2pt, circle]
\draw (180:1) node (v1) [draw] {};
		\draw (270:1) node (v3) [draw] {};
		\draw (360:1) node (v2) [draw] {};
		\draw (450:1) node (v4) [draw] {};
		\draw (v1) -- (v3);
		\draw (v1) -- (v4);
		\draw (v2) -- (v4);
	\end{tikzpicture}
&
	\begin{tikzpicture}[rotate=-90,scale=.7]
	\tikzstyle{every node}=[minimum width=0pt, inner sep=2pt, circle]
	\draw (-.5,-.9) node (v1) [draw] {};
	\draw (.5,-.9) node (v2) [draw] {};
	\draw (0,0) node (v3) [draw] {};
	\draw (-.5,.9) node (v4) [draw] {};
	\draw (.5,.9) node (v5) [draw] {};
	\draw (v1) -- (v2);
	\draw (v1) -- (v3);
	\draw (v2) -- (v3);
	\draw (v3) -- (v4);
	\draw (v3) -- (v5);
    \node[draw=none] at (0.9,0) {};
	\end{tikzpicture}
&
	\begin{tikzpicture}[scale=.7]
	\tikzstyle{every node}=[minimum width=0pt, inner sep=2pt, circle]
	\draw (-.5,0) node (v2) [draw] {};
	\draw (0,-.9) node (v1) [draw] {};
	\draw (.5,0) node (v3) [draw] {};
	\draw (1.5,0) node (v5) [draw] {};
	\draw (0,.9) node (v4) [draw] {};
	\draw (v1) -- (v2);
	\draw (v1) -- (v3);
	\draw (v2) -- (v3);
	\draw (v2) -- (v4);
	\draw (v3) -- (v4);
	\draw (v3) -- (v5);
	\end{tikzpicture}
\\
$P_4$
&
$\ltimes$
&
{\sf dart}\\
\end{tabular}
\end{center}
\caption{The minimal forbidden graphs for graphs with $\mz(G)\leq2$.}
\label{fig2}
\end{figure}

 The following result characterizes the graphs with $\mz(G) \leq 2$.

 \bigskip

\begin{thm}
\label{lemma:connectedmzandmrandgamma=2}\cite{BHL04,AIMZmr}
    Let $G$ be a graph.
    Then, the following are equivalent:
    \begin{enumerate}[label={\rm (\arabic*)}]
        \item $G^c$ has the form $(K_{s_1}\cup\cdots K_{s_t}\cup K_{p_1,q_1}\cup\cdots\cup K_{p_k,q_k})\vee K_r$ for non-negative integers $t,s_1,\dots,s_t,k,p_1,\dots,p_k,q_1,\dots,q_k,r$, with $p_i+q_i>0$ for $i=1,\dots,k$,
        \item $G$ is $\{P_4,\ltimes,{\sf dart},P_3\cup K_2,3K_2\}$-free,
        \item $\mz(G)\leq 2$.
    \end{enumerate}
\end{thm}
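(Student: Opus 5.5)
The plan is to prove the cycle (3)$\Rightarrow$(2)$\Rightarrow$(1)$\Rightarrow$(3). The two cited references carry most of the weight. Barrett, van der Holst and Loewy \cite{BHL04} characterize the graphs of minimum rank at most $2$ in exactly the form (1), and by the same list of forbidden induced subgraphs as (2). It is also known \cite{AIMZmr} that $\mz(G)\leq \operatorname{mr}(G)$, which follows from $Z(G)\geq M(G)$.

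\textbf{(3)$\Rightarrow$(2).} Since $\mz$ is monotone on induced subgraphs, it suffices to check that each of $P_4,\ltimes,{\sf dart},P_3\cup K_2,3K_2$ has $\mz\geq 3$.
\begin{itemize}
\item For $P_4$, the earlier lemma gives $\mz(P_4)=3$.
\item For $\ltimes$ (triangle $v_1v_2v_3$ with two pendant vertices $v_4,v_5$ at $v_3$), the set $\{v_1,v_4\}$ is zero forcing: $v_4\to v_3$, then $v_1\to v_2$, then $v_3\to v_5$. So $Z\le 2$ and $\mz\geq 3$.
\item For the ${\sf dart}$, a pair consisting of a pendant vertex and a suitable degree-two vertex similarly forces everything.
\item For $P_3\cup K_2$, an endpoint of the $P_3$ together with one vertex of the $K_2$ is zero forcing, so $Z\le 2$ on $5$ vertices.
\item For $3K_2$, one vertex per edge gives $Z\le 3$ on $6$ vertices.
\end{itemize}
In every case $\mz\geq 3$, which proves the implication.

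\textbf{(2)$\Rightarrow$(1).} This is the forbidden-subgraph half of \cite{BHL04}, which I would cite. If a self-contained argument is wanted, pass to complements. Condition (2) says $G^c$ is free of $P_4$ (which is self-complementary), $K_{2,2,2}=(3K_2)^c$, and the complements of $\ltimes$, ${\sf dart}$ and $P_3\cup K_2$.
\begin{enumerate}
\item Let $K_r$ be the set of dominating vertices of $G^c$, i.e.\ the isolated vertices of $G$, and delete it.
\item Since $G^c$ is $P_4$-free, it is a cograph. Each nontrivial connected component $C$ is therefore a join $C=A_1\vee\cdots\vee A_m$ of complement-disconnected pieces.
\item Using the three remaining forbidden complements together with $K_{2,2,2}$, show that either every $A_j$ is a single vertex, so $C$ is a clique $K_s$, or $m=2$ and both $A_1,A_2$ are edgeless, so $C=K_{p,q}$.
\end{enumerate}
This case analysis is the main obstacle if done by hand. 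I would organize it by whether some $A_j$ contains an edge, and by how many pieces have at least two vertices.

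\textbf{(1)$\Rightarrow$(3).} By \cite{BHL04}, every graph of the form (1) has $\operatorname{mr}(G)\leq 2$, and $\mz(G)\leq\operatorname{mr}(G)$ gives $\mz(G)\leq 2$.

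As a direct check that avoids minimum rank, take complements: $G$ is the disjoint union of $r$ isolated vertices and the join $H$ of independent sets $\overline{K_{s_i}}$ and graphs $K_{p_i}\cup K_{q_i}$. Isolated vertices contribute $0$ to $\mz$, and $Z$ is additive over components, so it suffices to show $Z(H)\geq |V(H)|-2$. In $H$, each vertex is adjacent to everything except its own "part": its own independent set, or the opposite clique of its pair $K_{p_i}\cup K_{q_i}$.

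Suppose some set $B$ leaves three white vertices, and consider the first force $x\to y$. The white vertices other than $y$ must all lie in the non-neighbourhood of $x$, so they sit in a single part. I would show inductively that the white set always keeps at least two vertices lying in a common part. Any vertex outside that part sees both of them, and any vertex inside it sees neither, so no further force is possible and $B$ is not zero forcing. Hence every zero forcing set misses at most two vertices, i.e.\ $Z(H)\geq |V(H)|-2$.
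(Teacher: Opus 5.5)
Your proof is correct and matches the argument behind the paper's citation: (1)$\Leftrightarrow$(2) and $\operatorname{mr}(G)\le 2$ for graphs of form (1) come from \cite{BHL04}, $\mz\le\operatorname{mr}$ and monotonicity under induced subgraphs come from \cite{AIMZmr}, and you check directly that the five forbidden graphs have $\mz\ge 3$ (your forcing sets for them are all valid). One slip in your optional direct proof of (1)$\Rightarrow$(3): when the two white vertices lie in a clique $K_{q_i}$, vertices of $K_{q_i}$ see both and vertices of $K_{p_i}$ see neither, which is the reverse of what you wrote, and other forces may still occur; the correct statement is that the two white vertices are twins, so every other vertex sees both or neither, hence neither can ever be forced and $B$ is not zero forcing.
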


In \cite{BHL04}, it was observed that if $G$ is a connected graph, then $G$ is $\{P_4,\ltimes,{\sf dart}\}$-free if and only if $G$ is $\{P_4,P_3\cup K_1,K_2\cup 2K_1\}$-free.
From which follows the next specialization.

\begin{thm}
\label{lemma:mzandmrandgamma=2}\cite{BHL04,AIMZmr}
    Let $G$ be a connected graph.
    Then, the following are equivalent:
    \begin{enumerate}[label={\rm (\arabic*)}]
        \item $G^c$ can be expressed as the union of complete graphs and of complete bipartite graphs,
        \item $G$ is $\{P_4,\ltimes,{\sf dart}\}$-free,
        \item $\mz(G)\leq 2$.
    \end{enumerate}
\end{thm}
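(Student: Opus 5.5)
We derive the statement from Theorem~\ref{lemma:connectedmzandmrandgamma=2}, using connectivity to simplify both the complement structure and the forbidden list. Throughout, $G$ is connected.

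\emph{(2)$\Leftrightarrow$(3).} By Theorem~\ref{lemma:connectedmzandmrandgamma=2}, $\mz(G)\leq 2$ holds if and only if $G$ is $\{P_4,\ltimes,{\sf dart},P_3\cup K_2,3K_2\}$-free. So it suffices to show that a connected $\{P_4,\ltimes,{\sf dart}\}$-free graph contains neither $P_3\cup K_2$ nor $3K_2$ as an induced subgraph. The observation from \cite{BHL04} quoted above says such a $G$ is $\{P_4,P_3\cup K_1,K_2\cup 2K_1\}$-free. Now $K_2\cup 2K_1$ is an induced subgraph of both $P_3\cup K_2$ (delete the middle vertex of the $P_3$) and $3K_2$ (delete one endpoint from each of two edges). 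Hence these two graphs are also excluded, and the equivalence follows.

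If one prefers not to rely on the observation, I would argue directly. Suppose $G$ is connected and $P_4$-free, and contains an induced $P_3\cup K_2$ or $3K_2$. Choose a shortest path in $G$ between two of its components. This path has length at most $2$, since an induced path on $4$ vertices is forbidden. Attaching it to the copy produces an induced $\ltimes$, ${\sf dart}$ or $P_4$. This case check is the main obstacle, and I would do it by enumerating how a connecting vertex can attach to the components.

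\emph{(1)$\Leftrightarrow$(3).} Theorem~\ref{lemma:connectedmzandmrandgamma=2} gives $\mz(G)\leq 2$ if and only if
\[
G^c=(K_{s_1}\cup\cdots\cup K_{s_t}\cup K_{p_1,q_1}\cup\cdots\cup K_{p_k,q_k})\vee K_r .
\]
If $r>0$, each vertex of $K_r$ is adjacent in $G^c$ to everything, so it is isolated in $G$. Since $G$ is connected, either $G=K_1$ or $r=0$. The case $G=K_1$ is itself a single complete graph $G^c=K_1$, so it fits form (1). Therefore, for connected $G$, form (1) with $r=0$ is exactly ``a union of complete graphs and complete bipartite graphs,'' where $K_{p,0}$ is read as $\overline{K_p}$, itself covered by isolated vertices as copies of $K_1$.

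Conversely, suppose $G^c$ is such a union and $G$ is connected. Then $G^c$ already has the form in Theorem~\ref{lemma:connectedmzandmrandgamma=2}(1) with $r=0$, so $\mz(G)\leq 2$. This completes the equivalences.
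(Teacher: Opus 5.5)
Your proposal is correct and follows the route the paper indicates. You specialize Theorem~\ref{lemma:connectedmzandmrandgamma=2} via the observation from \cite{BHL04}, supplying the detail the paper leaves implicit that $K_2\cup 2K_1$ is an induced subgraph of both $P_3\cup K_2$ and $3K_2$, and you use connectivity to force $r=0$ unless $G=K_1$. Your optional direct shortest-path argument is only sketched, but your main line does not depend on it.
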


Let $\Forbmz{k}$ denote the set of minimal (under induced subgraphs) forbidden graphs for graphs with $\mz(G)\leq k$.
Previous characterizations rely on the forbidden graphs in $\Forbmz{k}$ to obtain the structure of the graphs with $\mz(G)\leq k$.
Our first result gives an upper bound on the number of vertices for every graph in $\Forbmz{k}$.

\medskip

%\textcolor{orange}{Escribir en la introducción o preeliminares la definición de forcing process, para que quede claro que los $a_i's$ del sigiuente teorema no empiezan necesariamente de color negro.}
\begin{thm}\label{thm_mz_fin}
Let $k\geq 1$ be an integer. Then, every graph in $\Forbmz{k}$ has at most $2k+2$ vertices.
\end{thm}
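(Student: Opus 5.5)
The plan is to show directly that whenever $\mz(G)\geq k+1$, some induced subgraph $H$ of $G$ on at most $2k+2$ vertices already has $\mz(H)\geq k+1$. Minimality then finishes the proof. Indeed, if $G\in\Forbmz{k}$, every proper induced subgraph of $G$ has $\mz\leq k$. So $H$ cannot be proper, hence $G=H$ and $|V(G)|\leq 2k+2$.

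To find $H$, I would use the chronological description of a zero forcing process. Since $\mz(G)=|V(G)|-Z(G)$, a minimum zero forcing set $B$ leaves exactly $m=\mz(G)$ white vertices. Each force turns exactly one white vertex blue, so the process consists of $m$ forces $x_1\rightarrow y_1,\dots,x_m\rightarrow y_m$ performed in this order. Here the $y_i$ are distinct and $V(G)\setminus B=\{y_1,\dots,y_m\}$. The conditions encoded by this list are:
\begin{itemize}
\item $x_iy_i$ is an edge;
\item $x_i$ is blue at step $i$, so either $x_i\in B$ or $x_i=y_j$ for some $j<i$;
\item $x_i$ is not adjacent to $y_j$ for every $j>i$, because $y_j$ is still white and $y_i$ must be the unique white neighbor of $x_i$.
\end{itemize}
Note that an $x_i$ need not lie in $B$; it may be a previously forced vertex. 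This matches the remark about the $a_i$'s not necessarily starting blue.

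I would truncate this list to its first $k+1$ forces and put $S=\{x_1,\dots,x_{k+1},y_1,\dots,y_{k+1}\}$, so $|S|\leq 2k+2$. Let $H=G[S]$, and take as initial blue set in $H$ the set $S\setminus\{y_1,\dots,y_{k+1}\}$. I would then verify that $x_1\rightarrow y_1,\dots,x_{k+1}\rightarrow y_{k+1}$ is a valid forcing sequence in $H$:
\begin{itemize}
\item Each $x_i$ is blue at step $i$, either initially or because it equals an earlier $y_j$.
\item The edge $x_iy_i$ survives in $H$, since $H$ is induced.
\item The white vertices of $H$ at step $i$ are exactly $y_i,\dots,y_{k+1}$. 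Among these, $x_i$ is adjacent only to $y_i$, because adjacency in $H$ is inherited from $G$.
\end{itemize}
Hence $S\setminus\{y_1,\dots,y_{k+1}\}$ is a zero forcing set of $H$. This gives $Z(H)\leq |S|-(k+1)$, and therefore $\mz(H)\geq k+1$.

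The only delicate point is the bookkeeping in this check. One must confirm that restricting to $S$ neither removes a blue forcer nor introduces a new white neighbor. The first holds because every $x_i$ lies in $S$ and is blue by step $i$. The second holds because the white vertices of $H$ form a subset of the white vertices of $G$ at the same step. Everything else is immediate, so I expect no real obstacle beyond setting up the chronological list carefully.
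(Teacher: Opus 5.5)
Your proof is correct and follows the same route as the paper. You take a forcing process witnessing $\mz(G)\geq k+1$, keep the forcers and forced vertices of $k+1$ forces, and observe that the same forces remain valid in the induced subgraph on these at most $2k+2$ vertices. The paper starts from a zero forcing set of size exactly $|V(G)|-(k+1)$ instead of truncating the process of a minimum one, which is a cosmetic difference. Your explicit check of the forcing conditions in $G[S]$ also fills in details the paper leaves implicit.
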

\begin{proof}
Suppose $G$ is a graph on $n$ vertices with $\mz(G)\geq k+1$.  Then $Z(G)\leq n-(k+1)$, which means that $G$ has a zero forcing set $B$ of cardinality $n-(k+1)$. Pick a forcing process $(a_i\rightarrow b_i)_{i=1}^{k+1}$ that starts with the set $B$, and let
\[
W:=\{a_i\}_{i=1}^{k+1}\cup\{b_i\}_{i=1}^{k+1}.
\]
Note that $|W|\leq 2k+2$.
Also, note that $B\cap W$ is a zero forcing set of the induced subgraph $G[W]$ of $G$, under the same forcing process. This means that $G[W]$ satisfies $\mz(G[W])\geq k+1$ and $V(G[W])=|W|\leq 2k+2$.

By repeating the same process, we conclude that any minimal forbidden graph for graphs with $\mz(G)\leq k$ has at most $2k+2$ vertices. 
\end{proof}

\begin{figure}[ht]
\begin{center}
\begin{tabular}{cccccccccccccccc}
\begin{tikzpicture}[scale=0.5,thick]
    \tikzstyle{every node}=[minimum width=0pt, inner sep=2pt, circle]
        \draw (0:1) node[draw] (0) {};
        \draw (72:1) node[draw] (2) {};
        \draw (144:1) node[draw] (3) {};
        \draw (216:1) node[draw] (1) {};
        \draw (288:1) node[draw] (4) {};
        \draw  (0) edge (2);
        \draw  (0) edge (4);
        \draw  (1) edge (3);
        \draw  (1) edge (4);
    %\draw (0,-1.5) node () {$P_5$};
    \end{tikzpicture}
    &
    \begin{tikzpicture}[scale=0.5,thick]
    \tikzstyle{every node}=[minimum width=0pt, inner sep=2pt, circle]
        \draw (0:1) node[draw] (0) {};
        \draw (60:1) node[draw] (1) {};
        \draw (120:1) node[draw] (2) {};
        \draw (180:1) node[draw] (3) {};
        \draw (240:1) node[draw] (5) {};
        \draw (300:1) node[draw] (4) {};
        \draw  (0) edge (4);
        \draw  (1) edge (4);
        \draw  (2) edge (5);
        \draw  (3) edge (5);
        \draw  (4) edge (5);
    %\draw (0,-1.5) node () {$H$};
    \end{tikzpicture}
    &
    \begin{tikzpicture}[scale=0.5,thick]
    \tikzstyle{every node}=[minimum width=0pt, inner sep=2pt, circle]
        \draw (0:1) node[draw] (0) {};
        \draw (60:1) node[draw] (4) {};
        \draw (120:1) node[draw] (1) {};
        \draw (180:1) node[draw] (3) {};
        \draw (240:1) node[draw] (2) {};
        \draw (300:1) node[draw] (5) {};
        \draw  (0) edge (4);
        \draw  (0) edge (5);
        \draw  (1) edge (4);
        \draw  (1) edge (5);
        \draw  (2) edge (5);
        \draw  (3) edge (5);
    %\draw (0,-1.5) node () {$R$};
    \end{tikzpicture}
    &
    \begin{tikzpicture}[scale=0.5,thick]
    \tikzstyle{every node}=[minimum width=0pt, inner sep=2pt, circle]
        \draw (0:1) node[draw] (1) {};
        \draw (60:1) node[draw] (0) {};
        \draw (120:1) node[draw] (2) {};
        \draw (180:1) node[draw] (3) {};
        \draw (240:1) node[draw] (5) {};
        \draw (300:1) node[draw] (4) {};
        \draw  (0) edge (4);
        \draw  (0) edge (5);
        \draw  (1) edge (4);
        \draw  (2) edge (5);
        \draw  (3) edge (5);
        \draw  (4) edge (5);
    %\draw (0,-1.5) node () {\texttt{E?qw}};
    \end{tikzpicture}
    &
    \begin{tikzpicture}[scale=0.5,thick]
    \tikzstyle{every node}=[minimum width=0pt, inner sep=2pt, circle]
        \draw (0:1) node[draw] (4) {};
        \draw (60:1) node[draw] (1) {};
        \draw (120:1) node[draw] (2) {};
        \draw (180:1) node[draw] (3) {};
        \draw (240:1) node[draw] (5) {};
        \draw (300:1) node[draw] (0) {};
        \draw  (0) edge (4);
        \draw  (0) edge (5);
        \draw  (1) edge (4);
        \draw  (1) edge (5);
        \draw  (2) edge (5);
        \draw  (3) edge (5);
        \draw  (4) edge (5);
    %\draw (0,-1.5) node () {\texttt{E?rw}};
    \end{tikzpicture}
    &
    \begin{tikzpicture}[scale=0.5,thick]
    \tikzstyle{every node}=[minimum width=0pt, inner sep=2pt, circle]
        \draw (0:1) node[draw] (3) {};
        \draw (60:1) node[draw] (1) {};
        \draw (120:1) node[draw] (4) {};
        \draw (180:1) node[draw] (2) {};
        \draw (240:1) node[draw] (0) {};
        \draw (300:1) node[draw] (5) {};
        \draw  (0) edge (4);
        \draw  (0) edge (5);
        \draw  (1) edge (4);
        \draw  (1) edge (5);
        \draw  (2) edge (4);
        \draw  (3) edge (5);
        \draw  (4) edge (5);
    %\draw (0,-1.5) node () {\texttt{E?zW}};
    \end{tikzpicture}
    &
    \begin{tikzpicture}[scale=0.5,thick]
    \tikzstyle{every node}=[minimum width=0pt, inner sep=2pt, circle]
        \draw (0:1) node[draw] (0) {};
        \draw (60:1) node[draw] (3) {};
        \draw (120:1) node[draw] (2) {};
        \draw (180:1) node[draw] (4) {};
        \draw (240:1) node[draw] (1) {};
        \draw (300:1) node[draw] (5) {};
        \draw  (0) edge (3);
        \draw  (0) edge (5);
        \draw  (1) edge (4);
        \draw  (1) edge (5);
        \draw  (2) edge (5);
        \draw  (3) edge (5);
    %\draw (0,-1.5) node () {\texttt{ECRo}};
    \end{tikzpicture}
    \\
    $\Gaa$ & $\Gab$ & $\Gac$ & $\Gad$ & $\Gae$ & $\Gaf$ & $\Gag$
    \\
    %Gaa & Gab & Gac & Gad & Gae & Gaf & Gag
    %\\
    \begin{tikzpicture}[scale=0.5,thick]
    \tikzstyle{every node}=[minimum width=0pt, inner sep=2pt, circle]
        \draw (0:1) node[draw] (1) {};
        \draw (60:1) node[draw] (5) {};
        \draw (120:1) node[draw] (2) {};
        \draw (180:1) node[draw] (3) {};
        \draw (240:1) node[draw] (0) {};
        \draw (300:1) node[draw] (4) {};
        \draw  (0) edge (3);
        \draw  (0) edge (4);
        \draw  (0) edge (5);
        \draw  (1) edge (4);
        \draw  (1) edge (5);
        \draw  (2) edge (5);
    %\draw (0,-1.5) node () {\texttt{ECr{\char`\_}}};
    \end{tikzpicture}
    &
    \begin{tikzpicture}[scale=0.5,thick]
    \tikzstyle{every node}=[minimum width=0pt, inner sep=2pt, circle]
        \draw (0:1) node[draw] (5) {};
        \draw (60:1) node[draw] (3) {};
        \draw (120:1) node[draw] (0) {};
        \draw (180:1) node[draw] (1) {};
        \draw (240:1) node[draw] (4) {};
        \draw (300:1) node[draw] (2) {};
        \draw  (0) edge (3);
        \draw  (0) edge (4);
        \draw  (0) edge (5);
        \draw  (1) edge (4);
        \draw  (2) edge (5);
        \draw  (4) edge (5);
    %\draw (0,-1.5) node () {Net};
    \end{tikzpicture}
    &
    \begin{tikzpicture}[scale=0.5,thick]
    \tikzstyle{every node}=[minimum width=0pt, inner sep=2pt, circle]
        \draw (0:1) node[draw] (4) {};
        \draw (60:1) node[draw] (0) {};
        \draw (120:1) node[draw] (3) {};
        \draw (180:1) node[draw] (2) {};
        \draw (240:1) node[draw] (5) {};
        \draw (300:1) node[draw] (1) {};
        \draw  (0) edge (3);
        \draw  (0) edge (4);
        \draw  (0) edge (5);
        \draw  (1) edge (4);
        \draw  (1) edge (5);
        \draw  (2) edge (5);
        \draw  (3) edge (5);
    %\draw (0,-1.5) node () {\texttt{ECro}};
    \end{tikzpicture}
    &
    \begin{tikzpicture}[scale=0.5,thick]
    \tikzstyle{every node}=[minimum width=0pt, inner sep=2pt, circle]
        \draw (0:1) node[draw] (1) {};
        \draw (60:1) node[draw] (5) {};
        \draw (120:1) node[draw] (2) {};
        \draw (180:1) node[draw] (3) {};
        \draw (240:1) node[draw] (0) {};
        \draw (300:1) node[draw] (4) {};
        \draw  (0) edge (3);
        \draw  (0) edge (4);
        \draw  (0) edge (5);
        \draw  (1) edge (4);
        \draw  (1) edge (5);
        \draw  (2) edge (5);
        \draw  (4) edge (5);
    %\draw (0,-1.5) node () {\texttt{ECrg}};
    \end{tikzpicture}
    &
    \begin{tikzpicture}[scale=.5,thick]
    \tikzstyle{every node}=[minimum width=0pt, inner sep=2pt, circle]
        \draw (0:1) node[draw] (0) {};
        \draw (72:1) node[draw] (3) {};
        \draw (144:1) node[draw] (2) {};
        \draw (216:1) node[draw] (1) {};
        \draw (288:1) node[draw] (4) {};
        \draw (0,0) node[draw] (5) {};
        \draw  (0) edge (3);
        \draw  (0) edge (4);
        \draw  (0) edge (5);
        \draw  (1) edge (4);
        \draw  (1) edge (5);
        \draw  (2) edge (5);
        \draw  (3) edge (5);
        \draw  (4) edge (5);
    %\draw (0,-1.5) node () {\texttt{ECrw}};
    \end{tikzpicture}
    &
    \begin{tikzpicture}[scale=0.5,thick]
    \tikzstyle{every node}=[minimum width=0pt, inner sep=2pt, circle]
        \draw (0:1) node[draw] (0) {};
        \draw (60:1) node[draw] (3) {};
        \draw (120:1) node[draw] (2) {};
        \draw (180:1) node[draw] (4) {};
        \draw (240:1) node[draw] (1) {};
        \draw (300:1) node[draw] (5) {};
        \draw  (0) edge (3);
        \draw  (0) edge (5);
        \draw  (1) edge (4);
        \draw  (1) edge (5);
        \draw  (2) edge (4);
        \draw  (2) edge (5);
        \draw  (4) edge (5);
    %\draw (0,-1.5) node () {\texttt{ECZg}};
    \end{tikzpicture}
    &
    \begin{tikzpicture}[scale=0.5,thick]
    \tikzstyle{every node}=[minimum width=0pt, inner sep=2pt, circle]
        \draw (0:1) node[draw] (5) {};
        \draw (60:1) node[draw] (3) {};
        \draw (120:1) node[draw] (0) {};
        \draw (180:1) node[draw] (2) {};
        \draw (240:1) node[draw] (4) {};
        \draw (300:1) node[draw] (1) {};
        \draw  (0) edge (3);
        \draw  (0) edge (4);
        \draw  (0) edge (5);
        \draw  (1) edge (4);
        \draw  (1) edge (5);
        \draw  (2) edge (4);
        \draw  (3) edge (5);
        \draw  (4) edge (5);
    %\draw (0,-1.5) node () {\texttt{ECzW}};
    \end{tikzpicture}
    \\
    $\Gah$ & $\Gai$ & $\Gaj$ & $\Gak$ & $\Gal$ & $\Gam$ & $\Gan$
    \\
    %Gah & Gai & Gaj & Gak & Gal & Gam & Gan
    %\\
    \begin{tikzpicture}[scale=0.5,thick]
    \tikzstyle{every node}=[minimum width=0pt, inner sep=2pt, circle]
        \draw (0:1) node[draw] (0) {};
        \draw (60:1) node[draw] (2) {};
        \draw (120:1) node[draw] (4) {};
        \draw (180:1) node[draw] (1) {};
        \draw (240:1) node[draw] (3) {};
        \draw (300:1) node[draw] (5) {};
        \draw  (0) edge (2);
        \draw  (0) edge (4);
        \draw  (0) edge (5);
        \draw  (1) edge (3);
        \draw  (1) edge (4);
        \draw  (1) edge (5);
        \draw  (2) edge (4);
        \draw  (3) edge (5);
    %\draw (0,-1.5) node () {\texttt{EQzO}};
    \end{tikzpicture}
    &
    \begin{tikzpicture}[scale=.5,thick]
    \tikzstyle{every node}=[minimum width=0pt, inner sep=2pt, circle]
        \draw (0:1) node[draw] (2) {};
        \draw (60:1) node[draw] (4) {};
        \draw (120:1) node[draw] (1) {};
        \draw (180:1) node[draw] (3) {};
        \draw (240:1) node[draw] (5) {};
        \draw (300:1) node[draw] (0) {};
        \draw  (0) edge (2);
        \draw  (0) edge (4);
        \draw  (0) edge (5);
        \draw  (1) edge (3);
        \draw  (1) edge (4);
        \draw  (1) edge (5);
        \draw  (2) edge (4);
        \draw  (3) edge (5);
        \draw  (4) edge (5);
    %\draw (0,-1.5) node () {\texttt{EQzW}};
\end{tikzpicture}
&
\begin{tikzpicture}[scale=0.5,thick]
    \tikzstyle{every node}=[minimum width=0pt, inner sep=2pt, circle]
        \draw (0:1) node[draw] (0) {};
        \draw (360/7:1) node[draw] (4) {};
        \draw (720/7:1) node[draw] (2) {};
        \draw (1080/7:1) node[draw] (3) {};
        \draw (1440/7:1) node[draw] (1) {};
        \draw (1800/7:1) node[draw] (5) {};
        \draw (2160/7:1) node[draw] (6) {};
        \draw  (0) edge (4);
        \draw  (0) edge (6);
        \draw  (1) edge (5);
        \draw  (1) edge (6);
        \draw  (2) edge (6);
        \draw  (3) edge (6);
        \draw  (4) edge (6);
        \draw  (5) edge (6);
    \end{tikzpicture}
   &
    \begin{tikzpicture}[scale=0.5,thick]
    \tikzstyle{every node}=[minimum width=0pt, inner sep=2pt, circle]
        \draw (270:0.4) node[draw] (2) {};
        \draw (0:1) node[draw] (6) {};
        \draw (1,1) node[draw] (3) {};
        \draw (90:1) node[draw] (0) {};
        \draw (90:0.4) node[draw] (4) {};
        \draw (180:1) node[draw] (5) {};
        \draw (270:1) node[draw] (1) {};
        \draw  (0) edge (4);
        \draw  (0) edge (5);
        \draw  (0) edge (6);
        \draw  (1) edge (5);
        \draw  (1) edge (6);
        \draw  (2) edge (5);
        \draw  (2) edge (6);
        \draw  (3) edge (6);
        \draw  (4) edge (5);
        \draw  (4) edge (6);
    \end{tikzpicture}
&
    \begin{tikzpicture}[scale=0.5,thick]
    \tikzstyle{every node}=[minimum width=0pt, inner sep=2pt, circle]
        \draw (1,1) node[draw] (3) {};
        \draw (270:1) node[draw] (1) {};
        \draw (270:0.4) node[draw] (2) {};
        \draw (180:1) node[draw] (5) {};
        \draw (90:1) node[draw] (4) {};
        \draw (90:0.4) node[draw] (0) {};
        \draw (0:1) node[draw] (6) {};
        \draw  (0) edge (4);
        \draw  (0) edge (5);
        \draw  (0) edge (6);
        \draw  (1) edge (5);
        \draw  (1) edge (6);
        \draw  (2) edge (5);
        \draw  (2) edge (6);
        \draw  (3) edge (6);
        \draw  (4) edge (5);
        \draw  (4) edge (6);
        \draw  (5) edge (6);
    \end{tikzpicture}
    &
    \begin{tikzpicture}[scale=0.5,thick]
    \tikzstyle{every node}=[minimum width=0pt, inner sep=2pt, circle]
        \draw (0:1) node[draw] (0) {};
        \draw (60:1) node[draw] (4) {};
        \draw (120:1) node[draw] (1) {};
        \draw (180:1) node[draw] (2) {};
        \draw (240:1) node[draw] (5) {};
        \draw (300:1) node[draw] (3) {};
        \draw  (0) edge (4);
        \draw  (1) edge (4);
        \draw  (2) edge (5);
        \draw  (3) edge (5);
    \end{tikzpicture}
    &
    \begin{tikzpicture}[scale=0.5,thick]
    \tikzstyle{every node}=[minimum width=0pt, inner sep=2pt, circle]
        \draw (0:1) node[draw] (0) {};
        \draw (60:1) node[draw] (3) {};
        \draw (120:1) node[draw] (1) {};
        \draw (180:1) node[draw] (4) {};
        \draw (240:1) node[draw] (2) {};
        \draw (300:1) node[draw] (5) {};
        \draw  (0) edge (3);
        \draw  (0) edge (5);
        \draw  (1) edge (4);
        \draw  (2) edge (5);
    \end{tikzpicture}
\\
 $\Gao$ & $\Gap$ & $\Gaq$ & $\Gar$ & $\Gas$ & $\Gat$ & $\Gau$
\\
%Gao & Gap & Gaq & Gar & Gas & Gat & Gau
%\\

    \begin{tikzpicture}[scale=0.5,thick]
    \tikzstyle{every node}=[minimum width=0pt, inner sep=2pt, circle]
        \draw (0:1) node[draw] (0) {};
        \draw (360/7:1) node[draw] (1) {};
        \draw (720/7:1) node[draw] (2) {};
        \draw (1080/7:1) node[draw] (6) {};
        \draw (1440/7:1) node[draw] (3) {};
        \draw (1800/7:1) node[draw] (5) {};
        \draw (2160/7:1) node[draw] (4) {};
        \draw  (0) edge (4);
        \draw  (1) edge (5);
        \draw  (2) edge (6);
        \draw  (3) edge (6);
    \end{tikzpicture}
    &
    \begin{tikzpicture}[scale=0.5,thick]
    \tikzstyle{every node}=[minimum width=0pt, inner sep=2pt, circle]
        \draw (0:1) node[draw] (5) {};
        \draw (360/7:1) node[draw] (1) {};
        \draw (720/7:1) node[draw] (2) {};
        \draw (1080/7:1) node[draw] (3) {};
        \draw (1440/7:1) node[draw] (4) {};
        \draw (1800/7:1) node[draw] (0) {};
        \draw (2160/7:1) node[draw] (6) {};
        \draw  (0) edge (4);
        \draw  (0) edge (6);
        \draw  (1) edge (5);
        \draw  (2) edge (6);
        \draw  (3) edge (6);
        \draw  (4) edge (6);
    \end{tikzpicture}
    &
    \begin{tikzpicture}[scale=0.5,thick]
    \tikzstyle{every node}=[minimum width=0pt, inner sep=2pt, circle]
        \draw (0:1) node[draw] (1) {};
        \draw (360/7:1) node[draw] (5) {};
        \draw (720/7:1) node[draw] (2) {};
        \draw (1080/7:1) node[draw] (3) {};
        \draw (1440/7:1) node[draw] (4) {};
        \draw (1800/7:1) node[draw] (0) {};
        \draw (2160/7:1) node[draw] (6) {};
        \draw  (0) edge (4);
        \draw  (1) edge (5);
        \draw  (1) edge (6);
        \draw  (2) edge (5);
        \draw  (2) edge (6);
        \draw  (3) edge (6);
        \draw  (5) edge (6);
    \end{tikzpicture}
    &
    \begin{tikzpicture}[scale=0.5,thick]
    \tikzstyle{every node}=[minimum width=0pt, inner sep=2pt, circle]
        \draw (0:1) node[draw] (0) {};
        \draw (45:1) node[draw] (4) {};
        \draw (90:1) node[draw] (2) {};
        \draw (135:1) node[draw] (3) {};
        \draw (180:1) node[draw] (1) {};
        \draw (225:1) node[draw] (5) {};
        \draw (270:1) node[draw] (7) {};
        \draw (315:1) node[draw] (6) {};
        \draw  (0) edge (4);
        \draw  (1) edge (5);
        \draw  (2) edge (6);
        \draw  (3) edge (7);
    \end{tikzpicture}
    \\
    $\Gav$ & $\Gaw$ & $\Gax$ & $\Gay$
    %\\
    %Gav & Gaw & Gax & Gay
\end{tabular}
\end{center}
\caption{Forbidden graphs for graphs with $mz\leq3$.}\label{fig:FG}
\end{figure}
%{\color{violet} Aqui vamos a quitar los nombres Gaa, Gab, etc ?}

 As a corollary of Theorem \ref{thm_mz_fin}, it can be seen that the set $\Forbmz{k}$ is finite. 
 This is interesting since in the folklore\footnote{A result attributed to H. Tracy Hall is unpublished.} of the minimum rank problem, the number of minimal forbidden graphs with minimum rank at most 3 is infinite. 
 However, it is know that the family of graphs with minimum rank at most 3 are contained in the family of graphs with $\mz(G)$ at most 3. 
 An analogous example in the context of convex bodies is that we only need four linear constraints to describe a square, meanwhile, we need an infinite number of linear constraints to describe a circle contained in the square.

Due to the finiteness of $\Forbmz{k}$, with a brute-force search algorithm we were able to find $\Forbmz{3}$, that is, all the minimal forbidden graphs for graphs with $\mz(G)$ at most 3. The set of graphs in $\Forbmz{3}$ is depicted in Figure~\ref{fig:FG}.
Since the graphs with $\mz(G)$ at most 3 are the $\Forbmz{3}$-free graphs,  we have the following.

\begin{proposition}\label{mz3}
The graphs with $\mz(G)$ at most 3 do not contain any of the graphs shown in Figure~\ref{fig:FG} as induced subgraph. %{\color{red}add a few lines justifying why there are no more forbidden subgraphs}   
\end{proposition}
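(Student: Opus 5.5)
The statement only claims that every graph $G$ with $\mz(G)\le 3$ is $\mathcal{F}$-free, where $\mathcal{F}$ is the list in Figure~\ref{fig:FG}. By monotonicity of $\mz$ under induced subgraphs, this reduces to checking that each $H\in\mathcal{F}$ satisfies $\mz(H)\ge 4$. Indeed, if some $H\in\mathcal{F}$ were an induced subgraph of $G$, then $\mz(G)\ge \mz(H)\ge 4$, a contradiction. So the plan is a finite verification: for each of the 25 pictured graphs on $n\in\{5,6,7,8\}$ vertices, I exhibit a zero forcing set of size at most $n-4$, i.e.\ an explicit chronological list of four forces.

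Concretely, $\mz(H)\ge 4$ just means there is a blue set $B$ with $|B|=n-4$ and four forces $a_1\to b_1,\dots,a_4\to b_4$ that colour the four white vertices. For the small cases this is immediate.
\begin{itemize}
\item $\Gaa=P_5$: colour one endpoint blue and force along the path, giving $\mz=4$ (the path lemma with $k=5$).
\item $\Gay=4K_2$: colour one endpoint of each edge blue; each then forces its partner.
\item Forests such as $\Gat$, $\Gau$, $\Gav$ and the spider-like graphs: colour the leaves except those at the end of forcing chains. For example, in $\Gat$ (two $P_3$'s) colour one leaf of each $P_3$ blue and force along each path, giving $2+2=4$ forces.
\end{itemize}
For the denser six- and seven-vertex graphs, I start from a pendant vertex or a vertex of small degree. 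I put its closed neighbourhood minus one vertex in $B$, force, and then look for a vertex whose neighbourhood has become all blue except one. Each graph gets a short table listing $B$ and the four forces.

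The only real work, and the main obstacle, is doing this bookkeeping correctly and uniformly for all 25 graphs. The figures are given only as drawings, so I would first transcribe each into an edge list. To guard against errors, I would cross-check each certificate with a computer calculation of $Z(H)$. Since each graph has at most $8$ vertices, checking all subsets is trivial.

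Minimality of the listed graphs, and completeness of the list (that every graph with $\mz\ge 4$ contains one of them), are not part of this proposition. They rest on Theorem~\ref{thm_mz_fin} plus an exhaustive search over graphs on at most $8$ vertices, and I would only remark on them, not prove them here.
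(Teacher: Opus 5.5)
Your proposal is correct and is essentially the paper's argument: the paper also gets this proposition from the monotonicity of $\mz$ under induced subgraphs, together with a brute-force computer search showing that every graph in Figure~\ref{fig:FG} has $\mz\ge 4$; your explicit zero forcing sets of size $|V(H)|-4$ (which do exist for all 25 graphs, e.g.\ ${\Gat}=2P_3$, ${\Gau}=P_4\cup K_2$, ${\Gay}=4K_2$) are just a hand-checkable version of that computation. You are also right that minimality and completeness of the list are not needed for this direction.
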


This leads us to the following problem.

\begin{problem}\label{prob}
Give an structural characterization of the graphs with $\mz(G)\leq 3$.
That is, find a description of the graphs having none of the graphs shown in Figure~\ref{fig:FG} as induced subgraph.
\end{problem}

In the rest of the paper, we focus on solving Problem \ref{prob}. 
Let us recall previous results regarding the graphs with $\mz(G)\leq 3$. 
Eroh, Kang and Yi \cite{eroh2015zero} characterized the trees and unicyclic graphs satisfying $\mz(G)=3$. 
More recently, Curl et al. \cite{ZF_complement} investigated the zero forcing number of the complement of different graph families, including trees, $K_{2,2}$-free bipartite graphs, unicyclic graphs, cactus graphs and $2$-trees. 
Their results show that the complement graph of many of these families satisfy $\mz(G)\leq 3$.

The following result firstly appeared in \cite{eroh2015zero}, but we include another proof using the minimal forbidden graphs.

\begin{proposition}
    Let $F$ be a forest with $\mz(F)$ at most three. 
    Then, $F$ is one, or a subgraph, of the following three forests:
    \begin{itemize}
        \item $K_{1,n}\cup K_2 \cup I$, 
        \item $3K_2\cup I$ or
        \item $T \cup I$,
    \end{itemize}
    where $I$ is an independent set and $T$ is the tree obtained from $K_{1,n}$ by subdividing one edge.
\end{proposition}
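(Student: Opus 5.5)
We use Proposition~\ref{mz3}: $F$ is $\Forbmz{3}$-free, and we only need the forests in Figure~\ref{fig:FG}. Reading the figure, these are: $P_5$ ($\Gaa$); the double star $\Gab$ (two adjacent centers, each with two leaves); $\Gat$ ($2P_3$); $\Gau$ ($P_4\cup K_2$); $\Gav$ ($P_3\cup 2K_2$); and $\Gay$ ($4K_2$). These six forests are all we will use.

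Let $F$ have nontrivial components $C_1,\dots,C_m$; the isolated vertices form $I$. Each $C_i$ is a tree with no induced $P_5$, so it has diameter at most $3$. Hence $C_i$ is either a star or a double star. The double stars are ruled out except when one center has at most one leaf, because $\Gab$ is forbidden. Such a tree is exactly $K_{1,n}$ with one edge subdivided, i.e.\ the tree $T$ of the statement (this includes $P_4$).

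We now split on whether some component contains an induced $P_3$.

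If some $C_i$ is not a star (so it contains $P_4$), then every other nontrivial component would give $P_4\cup K_2$, which is $\Gau$. So $F=T\cup I$.

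If all components are stars but some $C_i=K_{1,n}$ with $n\ge2$, then any second component with a $P_3$ gives $2P_3$ ($\Gat$). Any two further components give $P_3\cup 2K_2$ ($\Gav$). So at most one other nontrivial component exists, and it is $K_2$. This gives $K_{1,n}\cup K_2\cup I$.

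If every nontrivial component is $K_2$, then $\Gay$ forbids four of them, so $F$ is a subgraph of $3K_2\cup I$.

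The converse direction is not required by the statement. The main obstacle is correctly identifying which graphs in Figure~\ref{fig:FG} are forests, and checking that the reduction to stars and subdivided stars is exhaustive. The fact that $P_5$-free trees have diameter at most $3$ needs a one-line justification: a longest path in a tree is induced.
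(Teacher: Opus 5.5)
Your proof is correct and follows essentially the same route as the paper: $P_5$-freeness forces every nontrivial component to have diameter at most $3$, and the forest members $\Gab$, $\Gat$, $\Gav$, $\Gay$ of Figure~\ref{fig:FG} then restrict the possible components and how many there can be. The difference is organizational. You split by the most complex component type rather than by the number of nontrivial components, and this leads you to invoke $\Gau$ (i.e.\ $P_4\cup K_2$) to exclude a diameter-$3$ component alongside another nontrivial one, a case the paper's argument never explicitly treats.
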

\begin{proof}
Let $F$ be a forest with $\mz(F)$ at most three.
By Proposition \ref{mz3}, $F$ does not contain any graph in Figure~\ref{fig:FG} as induced subgraph. 
Notice that, since $\Gaa$ is forbidden, then every nontrivial connected component of $F$ has diameter at most three. 
Furthermore, $F$ has at most three nontrivial connected components, since otherwise $\Gay$ would appear as an induced subgraph of $F$. 

Now, suppose that $F$ has three nontrivial connected components. 
Then $F\cong 3K_{2}\cup I$, where $I$ is an independent set of vertices, since otherwise $F$ contains $\Gav$ as an induced subgraph of $F$.

Suppose that $F=F_{1}\cup F_{2} \cup I$ has two nontrivial connected components and that $\operatorname{diam}(F_{1})=2$. 
Then, $F_{2}\cong K_{2}$, since otherwise $F_{2}$ contains an induced $P_{3}$, and consequently the forbidden graph $\Gat$ would appear as an induced subgraph of $F$, a contradiction. 
Therefore, $F\cong K_{1,n}\cup K_2 \cup I$.

Finally, assume that $F$ has a unique nontrivial connected component, this is $F=T\cup I$. 
If $\operatorname{diam}(F)=3$, then $T$ is the tree obtained from $K_{1,n}$ by subdividing one edge since otherwise $F$ contains the forbidden subgraph $\Gab$.
\end{proof}

The remaining sections are organized as follows. 
Recall that the {\it girth} of a graph is the length of a shortest cycle contained in the graph.
In Section~\ref{sec:mz<3g>=4}, we completely describe the graphs with $\mz(G)$ at most 3 and girth at least 4.
In Section~\ref{sec:mz<3g=3}, we study graphs with $\mz(G) \leq 3$ and girth 3, from which we derive structural descriptions. 
In Section~\ref{sec:allowable}, we present a computational approach to determine all allowable configurations and describe the resulting families.

\section{Graphs with $\mz$ at most 3 and girth at least 4}\label{sec:mz<3g>=4}

In this section, we completely describe the graphs with girth at least 4 and $\mz(G)$ at most 3.

% Notice that if $G$ has girth $g$, then $G$ has as an induced subgraph the cycle $C_g$.
% If we assume that $G$ has a girth greater than or equal to six, then $G$ would have as an induced subgraph a cycle of length at least $6$, which implies that $G$ contains the forbidden subgraph $P_5$.

\begin{proposition}
    There is no graph with girth at least $6$ and $\mz(G)\leq3$.
\end{proposition}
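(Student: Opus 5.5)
The statement has to be read with the convention that girth is defined only for graphs containing a cycle, since acyclic graphs such as $K_2$ or any forest from the previous proposition have $\mz\leq 3$. So the claim is: if $G$ contains a cycle and its shortest cycle has length $g\geq 6$, then $\mz(G)\geq 4$. By the monotonicity of $\mz$ under induced subgraphs, or equivalently by Proposition~\ref{mz3}, it suffices to find in $G$ an induced subgraph from Figure~\ref{fig:FG}. The natural candidate is the path $\Gaa = P_5$, which is forbidden since $\mz(P_5)=4$ by the lemma on paths.

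The key step is to show that a shortest cycle $C$ of $G$ is an induced subgraph. Suppose $C=v_1v_2\cdots v_gv_1$ had a chord $v_iv_j$. The chord splits $C$ into two cycles, each using the chord together with one of the two arcs of $C$ between $v_i$ and $v_j$. Their lengths add up to $g+2$, and each is strictly less than $g$ because each arc has length at least $2$. This contradicts the minimality of $g$, so $C$ is an induced cycle $C_g$ of $G$.

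Since $g\geq 6$, the vertices $v_1,\dots,v_5$ of $C$ induce a path. The only possible extra edge among them would be $v_1v_5$. It is not an edge of $C$, because $v_1$ is adjacent on $C$ only to $v_2$ and $v_g$, and $v_g\neq v_5$ since $g\geq 6$. It is not a chord either, since $C$ is induced. Hence $G$ contains an induced $P_5$, so $\mz(G)\geq \mz(P_5)=4$, contradicting $\mz(G)\leq 3$.

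There is no real obstacle here. The only points needing care are the chord argument for the shortest cycle and the interpretation of girth for acyclic graphs.
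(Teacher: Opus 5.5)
Your proof is correct and is the same as the paper's: a shortest cycle is induced, so when $g\ge 6$ five consecutive vertices on it induce $P_5$, which is forbidden since $\mz(P_5)=4$. You also supply details the paper leaves implicit, namely the chord argument showing the shortest cycle is induced, and the convention that girth applies only to graphs containing a cycle.
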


\begin{proof}
    Notice that if $G$ has girth $g$, then $G$ has as an induced subgraph the cycle $C_g$. Thus, if we suppose that $g\geq 6$, then $G$ would have as an induced subgraph a cycle of length at least $6$ which implies that $G$ contains the forbidden subgraph $P_5$.
\end{proof}

\begin{proposition}
If $G$ is a graph with girth $5$ and $\mz(G)\leq3$, then $G$ must be a $5$-cycle together with a (possible empty) set  of isolated vertices.
\end{proposition}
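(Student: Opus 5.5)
Since $G$ has girth $5$, it contains an induced $5$-cycle $C$ with vertices $c_0,c_1,c_2,c_3,c_4$ in cyclic order. The plan is to show first that every vertex outside $C$ is isolated from $C$. Then I will show there are no further edges at all, using only the forbidden graphs of Proposition~\ref{mz3}, chiefly $\Gaa=P_5$ and $\Gay=4K_2$.

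\emph{Step 1: no vertex outside $C$ has a neighbor on $C$.} Suppose $v\notin C$ is adjacent to some $c_i$, say $c_0$. Because the girth is $5$, $v$ has no triangle through it. Two neighbors of $v$ on $C$ would create a cycle of length at most $4$: they are either adjacent (a triangle) or at distance $2$ on $C$ (a $4$-cycle). Hence $v$ has exactly one neighbor on $C$, namely $c_0$. Then $v,c_0,c_1,c_2,c_3$ induces a path on five vertices, since $v$ is adjacent only to $c_0$ among them. This is a copy of $P_5=\Gaa$, contradicting Proposition~\ref{mz3}. So $C$ is a connected component of $G$.

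\emph{Step 2: the rest of $G$ is edgeless.} Suppose $G-V(C)$ contains an edge $xy$. By Step 1, $xy$ lies in a different component from $C$, so $G[V(C)\cup\{x,y\}]$ is $C_5\cup K_2$. This graph has an induced $P_4\cup K_2$ (delete one cycle vertex), and also an induced $2K_2\cup K_2=3K_2$. Among the seven-vertex graphs of Figure~\ref{fig:FG}, $\Gav$ is a $P_4\cup K_2\cup K_1$-type graph: its edges $0$--$4$, $1$--$5$, $2$--$6$, $3$--$6$ form $K_2\cup K_2\cup P_3$. The cleanest route is therefore to take the cycle path $c_1c_2c_3$ (an induced $P_3$), the non-adjacent edge $c_4c_0$... but these two pieces are adjacent, so instead I exhibit $\Gat$ or $\Gau$ directly. $\Gau$ has edges $0$--$3$, $0$--$5$, $1$--$4$, $2$--$5$, that is, $P_4\cup K_2$. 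Taking the induced path $c_0c_1c_2c_3$ together with $xy$ gives exactly $\Gau$, again a contradiction.

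\emph{Conclusion and alternative check.} Combining the two steps, every vertex outside $C$ is isolated, as claimed. The only real obstacle is correctly matching the induced configurations to entries of Figure~\ref{fig:FG}. If that identification were in doubt, I would instead verify directly that $\mz(C_5\cup K_2)=4$ and $\mz$ of ($C_5$ plus a pendant vertex) is at least $4$. For the first, take the zero forcing set consisting of two adjacent cycle vertices and one endpoint of $K_2$; this has size $3$, so $\mz=7-3=4$. Monotonicity of $\mz$ under induced subgraphs then rules both configurations out.
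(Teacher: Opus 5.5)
Your proof is correct and is essentially the paper's argument: a vertex outside the induced $5$-cycle can have only one neighbor on it (otherwise there is a shorter cycle), which gives an induced $\Gaa$, and an edge $xy$ elsewhere together with the induced path $c_0c_1c_2c_3$ gives $\Gau$ (that is, $P_4\cup K_2$). The aside about $\Gav$ plays no role in the argument and should be deleted; you first mislabel it, and it is actually $2K_2\cup P_3$.
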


\begin{proof}
Let $C_5=\{v_1,v_2,v_3,v_4,v_5\}$ be an induced cycle of $G$, with edges $v_iv_{i+1}$ for $ 1\leq i \leq 4$ and $v_5,v_1$.
Suppose that some $v_i\in V(C_5)$ is adjacent to some vertex $u\in V(G)$ outside of $C_5$. 
Note that $u$ cannot be adjacent to any other vertex in $C_5$, since otherwise $G$ would contain a cycle of length smaller than $5$ which is impossible. 
Then, $G$ contains the $\Gaa$ induced subgraph with vertices $\{u,v_i,v_{i+1},v_{i+2},v_{i+3}\}$. 
Thus, each vertex of $C_5$ cannot be adjacent to any vertex outside $C_5$. 
If any other connected component than $C_5$ of $G$ contains an edge $xy$, then $G$ contains as an induced subgraph the forbidden graph $\Gau$ formed by the vertices $\{x,y,v_1, v_2,v_3,v_4\}$. 
Therefore, any other connected component different from $C_5$ must be an isolated vertex.
\end{proof}

\begin{figure}[ht!]
\centering
		\begin{tikzpicture}[scale=0.7,thick]
		\tikzstyle{every node}=[minimum width=0pt, inner sep=2pt, circle]
			\draw (-1.96,0) node[draw,rectangle] (0) {$\overline{K_{m}}$ };
			\draw (-0.6180339887498951,-1.9021130325903073) node[draw,rectangle] (1) {$\overline{K_{n}}$ };
			\draw (1.618033988749894,-1.1755705045849465) node[draw] (2) { };
			\draw (1.6180339887498951,1.1755705045849458) node[draw] (3) { };
			\draw (-0.6180339887498946,1.9021130325903073) node[draw] (4) { };
			\draw (3,0.5) node[draw,rectangle] (5) { $\overline{K_{r}}$};
			\draw  (0) edge (1);
			\draw  (1) edge (2);
			\draw  (2) edge (3);
			\draw  (3) edge (4);
			\draw  (0) edge (4);

            \draw (6,-1.9) node[draw, rectangle] (9) { $U$};
			\draw (6,-0.8) node[draw, rectangle] (15) { $\overline{K_n}$ };
			\draw (6,0.7) node[draw, rectangle] (16) { $\overline{K_m}$};
			\draw (6,2.1) node[draw] (17) {};

			\draw  (15) edge (16);
			\draw  (16) edge (17);
            \end{tikzpicture}            
    \caption{The structure of a graph with girth $4$ and $\mz(G)$ at most three. The set $U$ is either an independent set or a $K_2$.} %\textcolor{orange}{La imagen parece más para el Lemma 8.}} \textcolor{purple}{Parece pero sí corresponde al teorema 9}
    \label{fig:C5K1}        
\end{figure}

\begin{theorem}
    If $G$ is a graph with girth $4$ and $\mz(G)\leq3$, then $G$ is graph isomorphic to any of the graphs shown in Figure~\ref{fig:C5K1}.
    %Assume $G$ has girth four. Then $G$ must be as in Figure~\ref{fig:C5K1}, where the vertices labeled $13, 24$ and $u$ represent independent sets and an edge means that there must exists every edge between the corresponding sets. %and a dashed edge means that there must exist either every edge or no edge between the corresponding sets. Moreover, one of the two dashed edges must correspond to the case where every edge exists.
\end{theorem}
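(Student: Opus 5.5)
The plan is to fix an induced $C_4$ and show that every other vertex of $G$ is forced into one of the few positions allowed by Figure~\ref{fig:C5K1}, using only the forbidden graphs of Figure~\ref{fig:FG} (Proposition~\ref{mz3}) together with triangle-freeness. Since $G$ has girth $4$, it contains an induced $C_4=v_1v_2v_3v_4$, and every vertex adjacent to the cycle has either exactly one neighbour on it, or exactly two, namely an opposite pair $\{v_1,v_3\}$ or $\{v_2,v_4\}$; adjacent cycle vertices would close a triangle. Let $X$ be the set of vertices adjacent to $v_2,v_4$, let $Y$ be the set adjacent to $v_1,v_3$, and let $P$ be the set of pendant-type vertices with a single cycle neighbour. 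Triangle-freeness makes $X$ and $Y$ independent.

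The key steps are as follows.

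\textbf{(i)} A vertex $u\in P$ adjacent only to $v_1$ (say) gives an induced $P_5$ or a small tree among the listed forbidden graphs, together with further cycle structure. I expect $u\,v_1v_2v_3$ plus another vertex to yield $\Gaa$ or $\Gab$ in most configurations. Pendants should still be allowed when the picture is read as the $5$-cycle blow-up $C_5[\overline{K_m},\overline{K_n},1,1,1]$ with an extra class $\overline{K_r}$. So rather than excluding $P$, I will show that $P$ attaches to at most one vertex of the cycle class structure. Compare with the left picture: there, $\overline{K_r}$ is joined to one single vertex, and the $C_4$ arises from $\overline{K_m},\overline{K_n}$ with repeated classes.

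\textbf{(ii)} Reorganize. Take the complete bipartite piece $K_{a,b}$ generated by the $C_4$, with $a,b\ge2$, and show that its classes are maximal independent sets, each a module with respect to the rest of the graph. Any vertex distinguishing two members of a class produces $\Gac$, $\Gad$ or $\Gaf$; these look like $K_{2,2}$ with pendants.

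\textbf{(iii)} Determine what else can attach. There are two cases. Either there is a path continuing from the bipartite block, which leads to the $C_5$-like configuration (with $m$ or $n$ equal to $1$ possibly giving $C_5$, which is excluded by girth $4$ only through the $\overline{K_m},\overline{K_n}$ classes), or a single vertex is joined to one whole class, which gives the right picture. In the latter case, longer paths are excluded by $P_5$.

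\textbf{(iv)} Other components. Using $\Gau$, $\Gat$ and $\Gav$ (a $C_4$ or $P_4$-containing part together with an edge), the rest of the graph is independent; in the path-like case of the right picture, one extra $K_2$ is allowed, giving $U$. This matches the disjoint part $U$ in the figure, and I would verify it by checking which of $\Gat$ and $\Gau$ appears when the main component contains an induced $P_4$.

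The main obstacle is step (ii)/(iii): a clean case analysis of vertices at distance two from the cycle, and of the adjacency between different attached vertices, each time exhibiting a specific member of Figure~\ref{fig:FG}. I would organize it by distance from the $C_4$. Every vertex lies within distance $2$, since otherwise $P_5$ appears. I would then check all $6$- and $7$-vertex triangle-free configurations against the list, possibly by computer as the authors did for $\Forbmz{3}$.
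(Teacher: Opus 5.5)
Your set-up matches the paper's: your $Y,X,P$ are its $V_{13},V_{24},V_i$. But none of the case analysis is carried out, and the one structural claim you commit to, step (ii), is false.

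\textbf{Step (ii) fails even on the target figure.} Take the left graph of Figure~\ref{fig:C5K1} with class cycle $A=\overline{K_m}$, $B=\overline{K_n}$, $c$, $d$, $e$ and $m\ge 2$. The induced $C_4$ $a\,b\,a'\,e$ generates the complete bipartite block with sides $A$ and $B\cup\{e\}$. The vertex $d$ is adjacent to $e$ but to no vertex of $B$, so these classes are not modules.

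\textbf{Wrong forbidden graphs.} The graphs $\Gad$ and $\Gaf$ you invoke both contain a triangle, so they can never occur at girth $4$. The only triangle-free members of Figure~\ref{fig:FG} are $\Gaa,\Gab,\Gac,\Gah,\Gat,\Gau,\Gav,\Gay$, and every exclusion must come from these. Without a valid module or twin reduction, a computer check of $6$- and $7$-vertex configurations says nothing about arbitrarily large graphs.

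\textbf{Misreading of Figure~\ref{fig:C5K1}.} The class $\overline{K_r}$ in the left picture has no edges; it is the set of isolated vertices. Joining a vertex to a single unlabelled vertex of the $C_5$ blow-up creates an induced $\Gaa$. The right picture is just $K_{m,n+1}\cup U$, since its top vertex is a twin of the vertices of $\overline{K_n}$.

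\textbf{The statement as drawn is false.} Your instinct in (i) that pendants survive is correct, and it exposes a defect in the statement itself.
\begin{itemize}
\item Take the banner: $C_4=v_1v_2v_3v_4$ plus a pendant $u$ at $v_1$. The set $\{u,v_2\}$ is zero forcing ($u\to v_1$, $v_1\to v_4$, $v_2\to v_3$), so $\mz=3$.
\item $K_{2,3}$ with a pendant, and $K_{3,3}$ minus one edge, both have $Z=3$ on six vertices, so again $\mz=3$.
\end{itemize}
All of these have girth $4$, but they are bipartite (so contain no $5$-cycle) and are not complete bipartite. Hence they appear in neither picture, and no proof of the statement as drawn can exist.

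The paper's proof misses these graphs for two reasons. First, its argument that $V_1$ and $V_{13}$ cannot both be non-empty uses $(V_1,u_1,V_{13},u_3,u_2)$, which is not an induced path because $u_1u_2$ is an edge. Second, its closing sentence passes over the case of exactly one non-empty $V_i$ and the case $V_{13},V_{24}\neq\emptyset$ with every $V_i$ empty.

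\textbf{A possible corrected route.} A sound argument needs a corrected target. One approach:
\begin{itemize}
\item A connected triangle-free $\Gaa$-free graph is either bipartite or a blow-up of $C_5$ by independent sets.
\item In the $C_5$ case, $\Gac$ forbids blowing up two non-adjacent classes, which leads to the left picture.
\item The bipartite case must be classified afresh using $\Gab$, $\Gac$ and $\Gah$.
\item The remaining components are then controlled by $\Gat$, $\Gau$ and $\Gav$.
\end{itemize}
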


\begin{proof}
Since $G$ has girth $4$, then $G$ contains an induced $C_4$, denoted by $C$. 
Let us assume that this 4-cycle consists of the vertex set $\{u_1,u_2,u_3,u_4\}$ with edges $u_1u_2$, $u_2u_3$, $u_3u_4$ and $u_1u_4$. 
For different elements $i,j\in\{1,2,3,4\}$, let $V_i$ 
be the set of vertices whose only neighbor in $C$ is the vertex $u_i\in V(C)$,
% adjacent to vertex $u_i\in V(C_4)$, as the only neighbor in the 4-cycle.
and let $V_{ij}$ be the set of vertices adjacent to $u_i$ and $u_j$. Also, let $U$ be the set of vertices that are not adjacent to any vertex in $C$.
Observe that the sets $V_{13}$ and $V_{24}$ are the only possible non-empty vertex sets adjacent to two vertices in $C$, because any other set of this type would give rise to a cycle of length $3$. Additionally, note that each set $V_{i}$ and each set $V_{ij}$ must be independent sets, as otherwise $G$ would have a cycle of length $3$. However, observe that each set $V_i$ must also induce a complete subgraph, since otherwise we obtain the forbidden subgraph $\Gac$. It follows that each set $V_i$ consists of a single vertex, from which this set will be simply referred to as a vertex.  Also note that there is no edge between $V_1$ or $V_3$ and any vertex in $V_{13}$, and there is also no edge between $V_2$ or $V_4$ and a vertex of $V_{24}$ since otherwise we obtain a $3$-cycle.

Observe that for any pair of different elements $i,j\in\{1,2,3,4\}$, there must exist an edge between $V_i$ and $V_j$, since otherwise we obtain the forbidden subgraph $\Gaa$ or $\Gah$. 
Therefore, there exist at most two of the vertices $V_i$, since otherwise the induced subgraph by the vertices $V_i$, $V_{j}$ and $V_k$ would induce a 3-cycle, which is impossible.
Moreover, if there exist two contiguous vertices $V_i$ and $V_{i+1}$ or $V_1$ and $V_4$, then $G$ would have the induced subgraph $\Gaa$ given by the vertices $V_i,V_{i+1},u_{i+1},u_{i+2},u_{i+3}$.
Therefore, if there exist two vertices $V_i, V_j$, these can only be $V_1$ and $V_3$, or $V_2$ and $V_4$.

% Now, let us assume that if there exists a vertex $V_i$, the first index is 1.
Now, assume that vertex $V_1$ exists. We claim that between $V_1$ and $V_{24}$ there must exist either every edge or no edge. To see this, suppose that $v_{24},v'_{24}$ are two different vertices in $V_{24}$, such that only one of them is adjacent to $V_1$, but then the induced subgraph given by the vertices $u_1,u_2,u_3,V_1,v_{24},v'_{24}$ would be isomorphic to the forbidden subgraph $\Gac$. In the same manner, if $V_3$ exists, then between $V_3$ and $V_{24}$ there must exist either every edge or no edge. However, if both $V_1$ and $V_3$ are non empty, then there exist every edge between $V_1$ and $V_{24}$, or there exist every edge between $V_3$ and $V_{24}$, because if $V_{24}$ has no edges between $V_1$ nor $V_3$, then the induced subgraph given by the vertices $V_3,V_1,u_1,u_2,v_{24}$ would be isomorphic to $\Gaa$. However, note that $V_1$ and $V_3$ cannot simultaneously have all adjacencies with $V_{24}$ since they are adjacent and would produce a 3-cycle. Analogously, if $V_2$ or $V_4$ exist, then between $V_2$ or $V_4$ and $V_{13}$ there must exist either every edge or no edge, and if both exist, then there exists every edge between $V_2$ and $V_{13}$, or there exists every edge between $V_4$ and $V_{13}$, but not all simultaneously. Now, we claim that if one set $V_i$ is non-empty, then there must exist every edge from $V_{24}$ to $V_{13}$ since otherwise we obtain the forbidden subgraph $\Gah$ or $P_5$ (see Figure \ref{girth42}). If $V_1$ is non-empty, then we obtain a $\Gah$ as the subgraph induced by the vertices $V_1, u_1,u_3,u_4,V_{13},V_{24}$ if $(V_1, V_{24})\notin E(G)$ or a $P_5$ induced by the vertices $V_1,u_2,u_3,V_{13},V_{24}$ if $(V_1, V_{24})\in E(G)$. Analogously for $i=2,3,4.$ %{\color{cyan} or $\Gaa$. To see this, first suppose that $V_1$ or $V_3$ are non empty and assume that there are two vertices $v_{24}\in V_{24}$ and $v_{13}\in V_{13}$ that are not adjacent. Let $i\in\{1,3\}$. If there are no edges between $V_i$ and $V_{24}$, then the induced subgraph given by the vertices $V_i,u_1,v_{13},u_3,u_4,v_{24}$ would be isomorphic to the forbidden subgraph $\Gah$. If there exists every edge between $V_i$ and $V_{24}$, then the induced subgraph given by the vertices $V_i,v_{24},u_2,u_3,v_{13}$ would be isomorphic to the forbidden subgraph $\Gaa$. A similar line of reasoning can be applied if $V_2$ or $V_4$ are non empty. This proves the claim.}

If we consider two vertices $x,y \in V_{13}$ and $v,w \in V_{24}$ then there are at least two edges between these vertices, since otherwise there exists a subgraph isomorphic to  either $\Gab$ or to $\Gah$ induced by the vertices $x,y,v,w,u_1,u_2.$ Moreover,  if there are exactly two, these two edges are not disjoint  since otherwise we obtain a $P_5$ induced by the vertices $x,y,v,w,u_1.$
Between $V_1$ and $V_{13}$ must exist every edge since otherwise we obtain the path $(V_1, u_1, V_{13}, u_3,u_2)$ which is  forbidden but this leads to a cycle of length three. Hence, either $V_{13}$ or $V_1$ are empty sets.

Now, observe that if $U$ contains an induced $P_3$, then the induced subgraph of this path together with the vertices $u_1,u_2,u_3$ is the forbidden subgraph $\Gat$. Also note that if $U$ contains two disjoint edges as induced subgraph, then the induced subgraph of these edges together with the vertices $u_1,u_2,u_3$ is the forbidden subgraph $\Gav$. Hence, the set $U$ is either an independent set, a complete subgraph, or a union of a complete subgraph with an independent set, but since the girth of $G$ is four, $U$ has at most one edge.
Observe that there are no edges between $U$ and any vertex $V_i$ for $i=1,2,3,4$, since otherwise we obtain the forbidden subgraph $\Gaa$. In fact, suppose that $u\in U$ is adjacent to some $V_i$, then, the induced subgraph given by the vertices $u,V_i,u_{i},u_{i+1},u_{i+2}$ is isomorphic to $\Gaa$.

Now, assume that one set $V_i$ is non-empty. In this case we have that any vertex of $U$ cannot be simultaneously adjacent to both a vertex of $V_{24}$ and a vertex of $V_{13}$, since every edge between $V_{13}$ and $V_{24}$ exists. Moreover, note that in this case, if some vertex of $u\in U$ is adjacent to a vertex of $V_{13}$ or $V_{24}$, then we obtain the forbidden subgraph $\Gaa$ as an induced subgraph. Finally, if $U$ contains an edge $xy$, we obtain the forbidden subgraph $\Gau$ as an induced subgraph by the vertices $u_1,v_{13},v_{24},u_4,x,y$, where $v_{13}\in V_{13}$ and $v_{24}\in V_{24}$. Thus, if one set $V_i$ is non-empty, then $U$ is an independent set.
\begin{figure}
    	\begin{center}
 		\begin{tikzpicture}[scale=1.7,thick]
 		\tikzstyle{every node}=[minimum width=0pt, inner sep=2pt, circle]
 			\draw (-2.45,0.74) node[draw] (0) {$u_1$};
 			\draw (-1.07,0.73) node[draw] (1) { $u_2$};
 			\draw (-2.44,-0.65) node[draw] (2) { $u_4$};
 			\draw (-1.03,-0.63) node[draw] (3) { $u_3$};
 			\draw (-0.55,1.87) node[draw] (4) { $V_1$};
 			\draw (0.41,0.3) node[draw] (5) { $V_3$};
 			\draw (-0.56,-1.32) node[draw] (6) { $V_{24}$};
 			\draw (-2.96,-1.34) node[draw] (7) { $V_{13}$};
 			\draw (0,-1.01) node[draw] (8) { $U$};
             %\draw (4) edge (7);
 			\draw  (0) edge (1);
 			\draw  (1) edge (3);
 			\draw  (2) edge (3);
 			\draw  (0) edge (2);
 			\draw  (0) edge (4);
 			\draw  (3) edge (5);
			\draw  (4) edge (5);
 			\draw  (1) edge (6);
 			\draw  (0) edge (7);
 			\draw  (3) edge (7);
 			\draw  (2) edge (6);
 			\draw  (4) edge (6);
             \draw (6) edge (7);

             \draw (4-2.45,0.74) node[draw] (0x) {$u_1$};
 			\draw (4-1.07,0.73) node[draw] (1x) { $u_2$};
 			\draw (4-2.44,-0.65) node[draw](2x) { $u_4$};
 			\draw (4-1.03,-0.63) node[draw](3x) { $u_3$};
 			\draw (4-0.55,1.87) node[draw] (4x) { $V_{1}$};
 			\draw (4.41,0.3) node[draw]   (5x) { $V_{3}$};
 			\draw (4-0.56,-1.32) node[draw](6x) { $V_{24}$};
 			\draw (4-2.96,-1.34) node[draw](7x) { $V_{13}$};
 			\draw (4,-1.01) node[draw] (8x) { $U$};
             %\draw (4) edge (7);
 			\draw  (0x) edge (1x);
 			\draw  (1x) edge (3x);
 			\draw  (2x) edge (3x);
 			\draw  (0x) edge (2x);
 			\draw  (0x) edge (4x);
 			\draw  (3x) edge (5x);
 			\draw  (4x) edge (5x);
 			\draw  (1x) edge (6x);
 			\draw  (0x) edge (7x);
 			\draw  (3x) edge (7x);
 			\draw  (2x) edge (6x);
 			\draw  (5x) edge (6x);
             \draw  (6x) edge (7x);
 		\end{tikzpicture}
 	\end{center}
     %los verices 6 7 y 8 son independientes
     \caption{The structure of a graph with girth four and $\mz(G)$ at most three.}
     \label{girth42}
 \end{figure}
 
 Now assume that $V_1$ and $V_3$ are non-empty. Notice that  $N(u_2)=N(u_4)=N(V_{13})$ we may consider $u_2$ and $u_4$ to be a part of $V_{13}.$ If $V_{24}$ is adjacent to $V_1$, then  $N(u_1)=N(V_{24})$. Thus, we may consider $u_1$ to be part of $V_{24}$. Analogously, if $V_{24}$ is adjacent to $V_3$ then  $N(u_3)=N(V_{24})$, and thus we may consider $u_3$ to be part of $V_{24}$. In both cases, we obtain the left graph of Figure \ref{fig:C5K1}, and the same result is obtained if we assume that $V_2$ and $V_4$ are non-empty.
 If both $V_{13}$ and $V_{24}$ are non-empty and the set $U$ contains an edge, we obtain the forbidden subgraph $\Gau$.
 If each $V_i$ for $i=1,2,3,4$ is empty, and either $V_{13}$ or $V_{24}$ are empty, then the set $U$ is not necessarily an independent set since the rest of the graph does not contain an induced $P_4$ and therefore the graph does not contain the induced subgraph $\Gau$. 
 Since the rest of the graph contains $P_3$ as an induced subgraph, $U$ must be $P_3$-free, that is, $U$ is either an independent set or induces a complete subgraph. Hence, we obtain the right graph of Figure \ref{fig:C5K1}. It is easy to see that by allowing any other set to be empty, we do not obtain any new graph structures. 
\end{proof}

\section{Graphs with $\mz$ at most 3 and girth equal to 3}\label{sec:mz<3g=3}

In this section, we focus on the case in which $G$ has girth 3. We obtain partial characterizations of the graphs with $\mz(G)\leq 3$ and girth $3$. To do this, throughout the section, we assume that $G$ contains a $3$-cycle $C$ with vertex set $\{u_1,u_2,u_3\}$. For different elements $i,j\in\{1,2,3\}$, let $V_i$ 
be the set of vertices whose only neighbor in $C$ is the vertex $u_i$,
% adjacent to vertex $u_i\in V(C_4)$, as the only neighbor in the 4-cycle.
and let $V_{ij}$ be the set of vertices whose only neighbors in $C$ are $u_i$ and $u_j$. Also, let $V_t$ be the set of vertices adjacent to $u_1,u_2$ and $u_3$, and let $U$ be the set of vertices that are not adjacent to any vertex in $C$.

\begin{obs}\label{g3}
     Suppose that all $V_{i}$ are non-empty. If there are no edges between such sets, then the vertices $u_1,u_2$ and $u_3$ together with one neighbor from each $V_i$, induce the forbidden graph $\Gai$. Hence, there must exist edges between the sets $V_{i}$. However, if there is only one edge between these sets, say $(v_{1},v_{2})$, then the forbidden graph $\Gaa$ appears as the induced subgraph of vertices $v_{1},v_{2},u_2,u_3,v_{3}$. Therefore, if all $V_{i}$ are non-empty, then at least two sets $V_{i}$ and $V_{j}$ are adjacent. %{\color{blue} Añadí este párrafo que explica más detalladamente el de abajo. Por cierto que no me apareció una $G_{10}$ pero sí $P_{5}$ al considerar solo una arista entre los $V_{i}$.}    
    
    %Notice that at least one $V_i$ is an empty set since otherwise we obtain the forbidden subgraph $\hyperref[fig:FG]{G_9}$ or $\hyperref[fig:FG]{G_{10}}$ unless there exist at least two of edges $(V_1,V_2), (V_2,V_3)$ or $(V_3,V_1).$
\end{obs}

\begin{remark}\label{rem1}
    In view of Observation \ref{g3}, we are going to assume in what remains of this section that each set $V_i$ is non-empty, and that every edge between $V_2$ and $V_i$ exists for $ i=1,3$. Moreover, unless otherwise specified, we will also assume that there are no edges between $V_1$ and $V_3$.
    %{\color{cyan} Por qué se asume que no existe arista entre $V_1$ y $V_3$?} {\color{violet} Pues sabemos que entre $V_1, V_2$ y $V_3$ hay al menos dos de las tres aristas. Así que decidí empezar con el caso en que hay exactamente dos aristas. Por lo tanto asumo sin pérdida de generalidad que no hay aristas entre $V_1$ y $V_3.$}
\end{remark}

\begin{lemma}
If $G$ has girth 3 and at least two sets $V_i$ and $V_j$ are non-empty,  then $U$ is an independent set.
\end{lemma}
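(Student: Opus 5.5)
The plan is to argue by contradiction. Suppose $U$ contains an edge $xy$. By symmetry of the triangle $C=\{u_1,u_2,u_3\}$, assume $V_1$ and $V_2$ are non-empty, and fix $v_1\in V_1$ and $v_2\in V_2$. By the definitions, $v_1$ is adjacent to $u_1$ only in $C$, $v_2$ to $u_2$ only, and $x,y$ have no neighbours in $C$. In every case I will exhibit an induced subgraph $H$ with $\mz(H)\geq 4$. This is done either by recognising $H$ as one of the graphs of Proposition~\ref{mz3} (Figure~\ref{fig:FG}), or by writing down a zero forcing set of $H$ of size $|V(H)|-4$ and invoking monotonicity of $\mz$ under induced subgraphs. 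The case split is on the adjacencies between $\{x,y\}$ and $\{v_1,v_2\}$, and on whether $v_1v_2\in E(G)$.

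\emph{Case 1: neither $x$ nor $y$ has a neighbour in $\{v_1,v_2\}$.}
\begin{itemize}
\item If $v_1v_2\notin E(G)$, then $v_1u_1u_2v_2$ is an induced $P_4$, and together with $xy$ it induces $P_4\cup K_2$, which is $\Gau$.
\item If $v_1v_2\in E(G)$, then $\{v_1,u_1,u_2,v_2,u_3\}$ induces the house (a $4$-cycle with a roof vertex $u_3$ on the edge $u_1u_2$). The house has $Z=2$: take $\{v_1,v_2\}$, then $v_1\to u_1$, $v_2\to u_2$, $u_1\to u_3$. So $\mz(\text{house})=3$. Adding the disjoint edge $xy$ gives an induced subgraph with $\mz=3+1=4$.
\end{itemize}

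\emph{Case 2: exactly one of $x,y$, say $x$, has a neighbour among $v_1,v_2$, say $v_1$.} Then $y\,x\,v_1\,u_1\,u_2$ is an induced path, since $y\not\sim v_1$ and no vertex of $U$ touches $C$. This is $\Gaa=P_5$. The same holds with $v_2,u_2,u_1$ if the neighbour is $v_2$.

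\emph{Case 3: both $x$ and $y$ have neighbours in $\{v_1,v_2\}$.} Here I would consider the subgraph induced by $\{x,y,v_1,v_2,u_1,u_2,u_3\}$ (or a subset of it) and again aim for an induced $P_5$. For example:
\begin{itemize}
\item If $x\sim v_1$, $y\not\sim v_1$ and $y\sim v_2$, then $v_1$ has the neighbour $x$ but not $y$, so the path $y\,x\,v_1\,u_1\,u_3$ is induced (this needs $x\not\sim u_3$, which holds, and $y\not\sim v_1$, which holds).
\item The genuinely symmetric subcase, where $x$ and $y$ are both adjacent to $v_1$ (and possibly to $v_2$), is handled by computing $Z$ of the resulting $6$- or $7$-vertex graph directly and checking that $Z\leq n-4$. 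If this fails, I would add a vertex of $V_3$, which exists by Remark~\ref{rem1}, to reach a graph in Figure~\ref{fig:FG}.
\end{itemize}

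I expect Case 3 to be the main obstacle. When $x$ and $y$ are both joined to $v_1$ and $v_2$, the obvious $6$-vertex subgraphs have $\mz$ only $3$: for instance, two triangles joined by an edge has $Z=3$. So the argument will likely need the seventh vertex $v_2$ or $v_3$, and a careful check against the list in Figure~\ref{fig:FG}.
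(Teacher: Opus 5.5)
There is a genuine gap in Case~3. Your Cases~1 and~2 and the first bullet of Case~3 are correct. Your house argument in Case~1 is also a valid alternative to the paper's route, which uses the induced $P_4$ $u_3u_2v_2v_1$ together with $xy$, giving $P_4\cup K_2=G_{21}$.

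The subcase where $x$ and $y$ are both adjacent to $v_1$ (with or without $v_2$), and its mirror image with $v_2$, is not proved. You only describe a computation you would try and a fallback if it fails. That fallback would in any case prove a weaker statement: the lemma assumes only two non-empty $V_i$, so importing a vertex of $V_3$ via Remark~\ref{rem1} is not what is asked. Your observation that $\{x,y,v_1,u_1,u_2,u_3\}$ has $\mz=3$ is right. But it shows you are looking at the wrong subgraph: insisting on keeping both endpoints of $xy$ is what causes the trouble.

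The missing idea is that the edge $xy$ plays no role in excluding edges from $U$ to $\{v_1,v_2\}$. A single vertex $u\in U$ with a neighbour there already forces a forbidden subgraph. Say $u\sim v_1$.

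If $v_1\not\sim v_2$: when $u\not\sim v_2$, $u\,v_1\,u_1\,u_2\,v_2$ is an induced $P_5$; when $u\sim v_2$, $v_2\,u\,v_1\,u_1\,u_3$ is an induced $P_5$.

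If $v_1\sim v_2$: when $u\not\sim v_2$, $u\,v_1\,v_2\,u_2\,u_3$ is an induced $P_5$. When $u\sim v_2$, $\{u,v_1,v_2,u_1,u_2,u_3\}$ induces the triangles $uv_1v_2$ and $u_1u_2u_3$ joined by $v_1u_1$ and $v_2u_2$, which is $G_{15}$. Indeed $\{u,v_1\}$ is a zero forcing set ($u\to v_2$, $v_1\to u_1$, $v_2\to u_2$, $u_1\to u_3$), so $\mz\geq 4$.

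Hence no vertex of $U$ is adjacent to $v_1$ or $v_2$. This makes your Cases~2 and~3 vacuous and leaves only Case~1, which you already handled. This is exactly how the paper argues: it first rules out all edges from $U$ to $V_i\cup V_j$ vertex by vertex, then finishes with $P_4\cup K_2$.
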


\begin{proof}
    Assume first that $V_1 \neq \emptyset \neq V_3$ and that there are no edges between $V_1$ and $V_3.$
    Let $v_1\in V_1$ and $v_3\in V_3$. Since we assume that $(v_1,v_3)\notin E(G)$, note that $G$ contains an induced $P_4$ given by the vertices $v_1, u_1, u_3, v_3$. Thus, if $U$ is adjacent to either $V_1$ or $V_3$ but not both, we obtain the forbidden subgraph $\Gaa.$ However, if there is some $u\in U$ adjacent to both $v_1\in V_1$ and $v_3\in V_3$, then we obtain the forbidden subgraph $\Gaa$ as an induced subgraph given by the vertices $v_3, u, v_1,u_1,u_2.$ This implies that $U$ is not adjacent to $V_1$ nor to $V_3$. Thus, if $U$ contained an edge, then this edge together with the induced $P_4$ gives the forbidden subgraph $\Gau$ as an induced subgraph in $G$. Hence, $U$ is an independent set.

    Now assume that $V_1 \neq \emptyset \neq V_2$ and that every edge exists between $V_1$ and $V_2$. Let $v_i \in V_i$ for $i=1,2$ and $u\in U$. If $(u,v_1), (u,v_2)$ then $G$ contains the forbidden subgraph $\Gao$ induced by the vertices $u, v_1, v_3, u_1, u_2, u_3.$ If $(u, v_i) \in E(G)$ for some $i=1,2$ but $(u, v_j)\notin E(G)$ for $i \neq j  =1,2$ then $G$ contains a $P_5$ induced by the vertices $u, v_i, v_j, u_j, u_3$. Thus $(u,v_1), (u, v_2)\notin E(G).$ If $U$ contains an edge, then together with the $P_4$ induced by the vertices $u_3, u_2,v_2,v_1$, we obtain the forbidden subgraph $\Gau$.   
\end{proof}

\begin{obs}
%{\color{cyan} En el enunciado del lema se debería agregar que se asume que se tienen al menos $u_2$ conjuntos $V_i$ no vacíos. Además de esto, falta considerar que si se asume que al menos un $V_i$ es no vacío y no es adyacente a $U$, entonces $U$ tendria que ser independiente o una completa, porque de otra forma se obtendría la $\Gat$. Y creo que este es el único caso en el que sale que $U$ es una completa, pues en los casos considerados en la prueba como está $U$ sólo es un conjunto independiente y no una completa como se enuncia.} 
%{\color{violet}Ya correji el enunciado del lema y agregue el caso en que los dos $V_i$ no vacíos son adyacentes entre sí.  Que te parece dejar el caso cuando solo uno es no vacio y no adyacente a U  como observacion:}
If at least one set $V_i$ is non-empty, and not adjacent to $U$, then $U$ may also be a complete subgraph.
\end{obs}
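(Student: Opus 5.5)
This observation is a possibility statement: under its hypotheses, $U$ is not forced to be independent, and a complete $U$ does not by itself create a forbidden induced subgraph. I would justify it in two steps. First, show that complete (or independent) is the most that can be said about $U$. Second, exhibit a configuration in which $U$ is a clique of size at least two and $\mz(G)\le 3$ still holds.

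\textbf{Step 1: restrictions on $U$.} Assume $V_i\neq\emptyset$ and that $V_i$ has no neighbours in $U$. Take $v_i\in V_i$ and $u_j$ with $j\ne i$. Then $v_i,u_i,u_j$ is an induced $P_3$ with no edges to $U$.
\begin{itemize}
\item If $U$ contained an induced $P_3$, it would form $P_3\cup P_3$, which is the forbidden graph $\Gat$.
\item If $U$ contained an induced $2K_2$, we would get $P_3\cup 2K_2$, which is $\Gav$.
\end{itemize}
So $U$ is $\{P_3,2K_2\}$-free. Hence $U$ is a disjoint union of at most one clique and some isolated vertices, and in particular it may be a complete subgraph.

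\textbf{Step 2: a clique $U$ is really allowed.} This is where the lemma's argument breaks down. That argument used an induced $P_4$ built from the other sets (for instance $v_1,u_1,u_3,v_3$) to get $\Gau = P_4\cup K_2$. If only one $V_i$ is non-empty, the part of $G$ adjacent to $C$ need not contain an induced $P_4$ avoiding $U$.

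The simplest witness is $V_1=\{v_1\}$ and all other sets empty, with $U=K_m$. Then $G=\ltimes' \cup K_m$, where the first component is the triangle $u_1u_2u_3$ with pendant vertex $v_1$. I would bound $\mz(G)$ directly.
\begin{itemize}
\item For the paw component, colour $u_2$ and $v_1$ blue. Then $v_1\to u_1$, and $u_1$ cannot yet force, so add $u_3$ as well. More simply, one checks that the paw has $Z=2$, so $\mz=2$.
\item For the clique, $\mz(K_m)=1$.
\end{itemize}
Since $Z$ is additive over components, $\mz$ is too, giving $\mz(G)=3$. So $G$ has $\mz(G)\le 3$ and $U$ is a complete graph. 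The same check works for a clique $U$ with no edges to $C\cup V_1$ whenever the rest of the graph has $\mz\le 2$.

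\textbf{Main obstacle.} The delicate part is making precise the conditions under which a clique $U$ is compatible. If $U$ also has edges to $V_{ij}$ or $V_t$, the forbidden list in Figure~\ref{fig:FG} must be re-checked. I would handle that by checking additivity over components in the disconnected case and leave the connected cases to the later allowable-configuration analysis of Section~\ref{sec:allowable}.
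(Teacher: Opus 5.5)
Your proposal is correct and matches the reasoning the paper has in mind. The observation is stated without proof, but the surrounding argument (and the analogous girth-4 discussion of $U$) runs as follows: the induced $P_3$ $v_iu_iu_j$, which has no neighbours in $U$, rules out an induced $P_3$ or $2K_2$ in $U$ via $\Gat$ and $\Gav$, while the $\Gau$ obstruction of the preceding lemma needs a second non-empty $V_j$ to supply an induced $P_4$. Your witness (the paw together with a disjoint $K_m$, $m\ge 2$, so that $\mz=2+1=3$ by additivity of $Z$) makes the ``may'' claim concrete without relying on the completeness of the computer-generated forbidden list.

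One small correction: for $\mz\le 3$ you need the lower bound $Z(\text{paw})\ge 2$, which holds because only paths have $Z=1$; exhibiting a forcing set gives the wrong direction. In fact $\{u_2,v_1\}$ already forces the paw, since after $v_1\to u_1$ the vertex $u_2$ has $u_3$ as its only white neighbour.
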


\begin{theorem}\label{aristas}
    Assume that $G$ is a graph of girth three,  let  $V_i$ and $V_{ij}$ be the sets as in Remark \ref{rem1}. Then $G$ has the following form, where, every lilac edge may or may not exist,  every pink vertex is a complete subgraph, and the orange vertex is either a complete or an independent set.
    Moreover there are at least three lilac edges; and if there are exactly three lilac edges, these are $(V_{12},V_{13}),(V_{12},V_{23})$ and $(V_{13},V_{23})$.
    %to be specific, every case with seven or eight lilac edges is possible, the only case possible with three lilac edges is $\{(12,13), (13,23), (12,23)\}$, there are four possible cases with four lilac edges, twelve possible cases with five lilac edges, and only three forbidden cases with six lilac edges, all of which shall be revealed in the proof of the theorem.
   % {\color{magenta} Faltan las formas} {\color{violet} son las 43 que les mencion\'e, o podemos reescribir el enunciado del teorema. Podemos decidir despues de ver cuantos casos particulares tenemos dibujados. Por lo pronto reescribi el teorema.}

    \begin{center}
		\begin{tikzpicture}[scale=1,thick]
		\tikzstyle{every node}=[minimum width=0pt, inner sep=2pt, circle]
			\draw (-2,0) node[draw] (0) { \tiny $u_2$};
			\draw (-1,-1.732050807568877) node[draw] (1) { \tiny $V_{23}$};
			\draw (1,-1.7320508075688776) node[draw] (2) { \tiny $u_3$};
			\draw (2,0) node[draw] (3) { \tiny $V_{13}$};
			\draw (1.000000000000001,1.732050807568877) node[draw] (4) { \tiny $u_1$};
			\draw (-1,1.7320508075688772) node[draw] (5) { \tiny $V_{12}$};
			\draw (1.51,2.64) node[draw, fill=magenta] (6) { \tiny $V_1$};
			\draw (-3,0) node[draw, fill=orange] (7) { \tiny $V_2$};
			\draw (1.49,-2.59) node[draw, fill=magenta] (8) { \tiny $V_3$};
			\draw  (0) edge (1);
			\draw  (0) edge (2);
			\draw  (0) edge (4);
			\draw  (0) edge (5);
			\draw  (1) edge (2);
			\draw[Orchid]  (1) edge (3);
			\draw[Orchid]  (1) edge (5);
			\draw  (2) edge (3);
			\draw  (2) edge (4);
			\draw  (3) edge (4);
			\draw[Orchid]  (3) edge (5);
			\draw  (4) edge (5);
			\draw  (4) edge (6);
			\draw  (0) edge (7);
			\draw  (2) edge (8);
			\draw  (3) edge (6);
			\draw  (3) edge (8);
            
			\draw[Orchid]  (8) edge (7);
			\draw[Orchid]  (6) edge (7);
            
			\draw[Orchid]  (5) edge (7);
			\draw[Orchid]  (1) edge (7);
            \draw[Orchid]  (3) edge (7);

            \draw[Orchid]  (5) edge (6);
			\draw[Orchid]  (1) edge (8);
            
			%\draw[dashed]  (1) edge (8);
			%\draw[dashed]  (5) edge (6);
			\draw  (6) edge (7);
			\draw  (7) edge (8);
			\draw[dashed]  (6) edge (8);
			\draw[bend left]  (1) edge (6);
			\draw[bend right]  (5) edge (8);
			%\draw[dotted, bend left]  (3) edge (7);
		\end{tikzpicture}
	\end{center}
\end{theorem}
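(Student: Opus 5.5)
We will work throughout in the setting of Remark~\ref{rem1}: every $V_i$ is non-empty, $V_2$ is completely joined to $V_1$ and to $V_3$, and there are no edges between $V_1$ and $V_3$. Since Observation~\ref{g3} guarantees at least two of the three joins among the $V_i$, this is the exactly-two case up to symmetry. Fix representatives $v_i\in V_i$.

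The proof splits into two parts. The first establishes the internal structure of the pink and orange vertices and the forced black edges of the picture. The second determines which subsets of the lilac edges are admissible. Every step has the same form: exhibit five or six vertices inducing a graph of Figure~\ref{fig:FG}, and conclude that the corresponding configuration cannot occur. This uses Proposition~\ref{mz3}.

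\textbf{Structure and forced edges.}
\begin{enumerate}
\item First we show that $V_1$ and $V_3$ are cliques. If $v_1,v_1'\in V_1$ were non-adjacent, then $v_1,v_1',u_1,u_2,u_3$, together with $v_3$, induce $\Gac$ or $\Gab$. We check each option against the figure.
\item Next we show that $V_2$ is a clique or an independent set. An induced $P_3$ or $\overline{P_3}$ inside $V_2$, combined with $u_2$ and the clique $V_1$, produces a forbidden graph. Here the candidates are $\Gap$ and $\Gar$, whose shapes contain a universal-ish vertex with pendant structure.
\item Then we force the black edges. These are $V_{13}$ to $V_1$ and to $V_3$, $V_{23}$ to $V_1$, and $V_{12}$ to $V_3$. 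For instance, if $x\in V_{13}$ were not adjacent to $v_1$, then $v_1,u_1,x,u_3,v_3$ would induce a $P_4$ with extra structure. Adding $v_2$ gives $\Gaa$ or $\Gah$.
\item Similarly, if $y\in V_{23}$ missed $v_1$, then $v_1,u_1,u_3,y$ together with $v_3,v_2$ would give a forbidden configuration.
\item We also show that each set $V_{ij}$ and $V_t$ behaves homogeneously with respect to each $V_k$: a vertex of $V_k$ is adjacent to all of it or none of it. The argument is the same as the all-or-nothing argument for $V_1$ versus $V_{24}$ in the girth-4 theorem, using $\Gac$. This homogeneity is what allows each class to be drawn as a single node.
\item Finally we rule out $V_1$--$V_3$ adjacency once more via the dashed edge. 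We also absorb $V_t$ and $U$, using the preceding lemma that $U$ is independent, and check that they add no new adjacency types.
\end{enumerate}

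\textbf{The lilac edges.} There are ten lilac edges. We enumerate all $2^{10}$ subsets modulo the symmetry swapping indices $1$ and $3$. For each subset, we test whether the resulting representative graph, taking one vertex per class, contains a member of $\Forbmz{3}$. This is finite because of Theorem~\ref{thm_mz_fin}.

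To make the "at least three" claim human-readable, we argue as follows. If two of the three edges among $V_{12},V_{13},V_{23}$ are missing, say $V_{12}V_{13}$ and $V_{12}V_{23}$, then $V_{12}$ together with $V_{13}$, $u_3$, $V_3$ and $v_2$ induces a $P_5$ or $\Gaj$.

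Similarly, once all three triangle edges among the $V_{ij}$ are present, the other lilac edges become free. With fewer than three of those edges, compensating lilac edges to $V_2$, $V_1$ or $V_3$ are needed, and each such partial configuration produces a $P_5$ or $\Gai$-type subgraph unless at least four lilac edges are present in total. Together these show that a configuration with exactly three lilac edges must be exactly the triangle $\{(V_{12},V_{13}),(V_{12},V_{23}),(V_{13},V_{23})\}$.

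\textbf{Main obstacle.} The hardest part is this last case analysis. The many subsets with four to six lilac edges must each be certified allowable or forbidden. By hand the check is error-prone, so I would rely on a computer verification of the bounded representative graphs, in the spirit of Section~\ref{sec:allowable}. A hand argument would be given only for the counting claims in the statement.
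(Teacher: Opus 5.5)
\textbf{Gap in the ``at least three'' and ``exactly three'' claims.} Your argument fails in the part that proves what the statement actually asserts.

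First, there are eight optional edges, not ten. The Orchid strokes $V_1V_2$ and $V_2V_3$ are overdrawn in black because Remark~\ref{rem1} forces those joins, and you assume them yourself. If you counted them, ``exactly three lilac edges'' could never be the triangle.

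More seriously, your hand argument rests on a false lemma. Your witness for ``$V_{12}V_{13}$ and $V_{12}V_{23}$ missing $\Rightarrow$ forbidden'' is $\{x_{12},x_{13},u_3,v_3,v_2\}$. This set contains the triangle $x_{13}u_3v_3$, since $V_{13}$ is forced complete to $V_3$. The only five-vertex graph in Figure~\ref{fig:FG} is $P_5$, and $\Gaj$ has six vertices, so this set is never forbidden.

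In fact no witness exists. Take the representative graph with exactly the five non-triangle lilac edges $(V_1,V_{12}),(V_3,V_{23}),(V_2,V_{12}),(V_2,V_{13}),(V_2,V_{23})$. Its non-neighbourhoods are $u_1\colon\{v_2,v_3,x_{23}\}$, $u_2\colon\{v_1,v_3,x_{13}\}$, $u_3\colon\{v_1,v_2,x_{12}\}$, $v_1\colon\{u_2,u_3,v_3\}$, $v_2\colon\{u_1,u_3\}$, $v_3\colon\{u_1,u_2,v_1\}$, $x_{12}\colon\{u_3,x_{13},x_{23}\}$, $x_{13}\colon\{u_2,x_{12},x_{23}\}$, $x_{23}\colon\{u_1,x_{12},x_{13}\}$. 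Any two of these share at most one vertex. A chronological list $a_1\to w_1,\dots,a_4\to w_4$ would make $w_3,w_4$ common non-neighbours of the distinct vertices $a_1,a_2$. Hence $\mz\le 3$, and this configuration is allowable with all three triangle edges absent. So ``first force two triangle edges, then count'' cannot work.

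What works, and is the paper's route, is the following. Because $V_{13}$ is complete to $V_1$ and $V_3$, and $V_{12}$ is complete to $V_3$, if neither $(V_1,V_{12})$ nor $(V_{12},V_{13})$ is present then $V_1,V_{13},V_3,V_{12},u_2$ is an induced $P_5$. Symmetrically, one of $(V_3,V_{23})$, $(V_{13},V_{23})$ must be present. These two disjoint requirements already give at least two lilac edges. They leave a finite list of two- and three-edge configurations, each of which must be killed by an explicit forbidden subgraph, except the triangle (Tables~\ref{2aristas} and~\ref{3aristas}). A computer check over the $2^8$ subsets, with the forced edges in place, would be an acceptable substitute.

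\textbf{Local repairs in the structural part.} Several of your witnesses are misidentified or fail in some subcases.
\begin{itemize}
\item Two non-adjacent vertices of $V_1$ with $u_1,u_2,u_3,v_3$ induce $\Gad$, a triangle with two pendants at one vertex and one at another. It is not the triangle-free $\Gac$ or the tree $\Gab$.
\item $\{u_2,v_1\}$ plus three vertices of $V_2$ has five vertices and is not $P_5$, so it is never forbidden; note also that $|V_1|$ may be $1$. Use $u_1,u_3,v_3$ instead, which gives $\Gam$ for an induced $P_3$ and $\Gag$ for an induced $K_2\cup K_1$.
\item In the $V_{13}$ and $V_{23}$ steps, using $v_2$ instead of $u_2$ breaks down. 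For example, if $x\sim v_3$ and $x\not\sim v_1,v_2$, your six vertices induce a $5$-cycle plus a vertex adjacent to three consecutive cycle vertices, which has $\mz=3$. With $u_2$ you get $\Gaf$ or $\Gan$ for $V_{13}$, and $P_5$ or $\Gak$ for $V_{23}$.
\item Your step~5 homogeneity claim is not justified by the girth-four $\Gac$ argument, which depends on there being no triangles. The theorem, like the paper's proof, works with one representative per class, so either prove homogeneity with new witnesses or drop it.
\end{itemize}
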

\begin{proof}
%From a vertex in $V_{ij}$, there must exist either every edge or no edge to the vertices of $V_i$ since otherwise we obtain either a $P_5$ or the forbidden subgraph $\hyperref[fig:FG]{G_{12}}.$ {\color{violet} esto no lo especifica el teorema pero podriamos agregarlo. el problema es que para las demas aristas no se si son todas o ninguna (no se si con el programa se sepa).}

Notice first that both $V_1$ and $V_3$ must be complete subgraphs since otherwise we obtain the forbidden subgraph $\Gad$. We also claim that $V_2$ is either a complete graph or an independent set, since an induced $P_3$ or an induced $K_2\cup K_1$ in $ V_2$ lead to the forbidden subgraphs $\Gam$ or $\Gag$ respectively,  induced by the three vertices in $V_2$ together with the vertices $u_1,u_3,$ and $v_3\in V_3$. % In fact, suppose that $v_2,v_2',w$ are tree vertices in $V_2$ that do not induce a $3$-cycle nor are independent. Thus, assume that $(v_2,v_2')\in E(G)$, and let $v_1\in V_1$. If there is one more edge between such vertices, say $(v_2,w)$, then we obtain the forbidden subgraph $\Gam$ as an induced subgraph given by the vertices $v_2,v_2',w,v_1,u_1,u_3$; if $(v_2,w),(v_2)\notin E(G)$, then we obtain the forbidden subgraph $\Gag$ as an induced subgraph given by the vertices $v_2,v_2',w,v_1,u_1,u_3$. Hence $V_2$ is a complete graph or an independent set as claimed.

Now we claim that there must exist every edge between $V_1$ and $V_{13}$, and every edge between $V_3$ and $V_{13}$, since otherwise we obtain the forbidden subgraph $\Gaf$ or $\Gan$ as induced subgraph. To see this, let $v_{13}\in V_{13}$, $v_1\in V_1$ and $v_3\in V_3$. Note that if there is no edge between such vertices, then we obtain the subgraph $\Gaf$ as induced by the vertices $u_1,u_2,u_3,v_{13},v_1,v_3$; if only $v_1$ or $v_3$ is adjacent to $v_{13}$, say $v_1$, then  we obtain the subgraph $\Gan$ as induced by the vertices $u_1,u_2,u_3,v_{13},v_1,v_3$. 

 We claim that there must exist every edge between $V_{23}$ and $V_1$ since otherwise there exists an induced $\Gaa$ or $\Gak$ Let $v_{23}\in V_{23}$ and $v_i\in V_i$ for $i=1,2,3$.  If $(v_3, v_{23})\in E(G)$ then the $\Gaa$ is induced by the vertices $(v_3, v_{23}, u_2,u_1,v_1)$. If $(v_3,v_{23})\notin E(G)$ then the $\Gak$ is induced by the vertices $v_1,v_3,u_1,u_2,u_3,v_{23}.$

Analogously, there must exist every edge between $V_3$ and $V_{12}.$  If either the edge $(V_{2},V_{12})$ or the edge $(V_{2}, V_{23})$ does not exist, then $V_2$ must be a complete subgraph since otherwise we obtain the forbidden subgraph $\Gae$.

    %solo en los casos en que existen ambas v_2,12 y V_2,23 es posible que v2 no sea completa

Between the sets $\{V_1, V_2, V_3, V_{12}, V_{13}, V_{23}\}$, when considered to be of cardinality one, there are eight possible edges which might or might not exist. %We are going to show which are all possible combinations. 

Notice that if there exists a unique lilac edge, we obtain the forbidden subgraphs as we can see in Table \ref{1}. The cases where the unique lilac edge is $(V_{23}, V_2),(V_{23}, V_3),(V_{23}, V_{13})$ are analogous.

\begin{table}[h!]
    \centering
    \begin{tabular}{|c|c|} \hline
    lilac edges& forbidden subgraph \\ \hline
     $(V_1,V_{12}) $  & $\Gac$ with vertices $\{V_{13},u_3,V_1,V_{23},V_2,V_{12}\}$  \\
      $(V_2,V_{12})$  & $\Gag$ with vertices $\{u_3, u_1,V_{23},V_2,V_{12},V_{13}\}$ \\
      $(V_{12},V_{13})$  &  $\Gak$ with vertices $\{V_2,V_{13},u_3,V_{12},V_3, V_{23}\}$ \\
      $(V_{12}, V_{23})$ & $ \Gah$ with vertices $\{V_{12}, V_{23}, u_1, V_{13}, V_3,V_2\}$ \\
      $(V_2, V_{13})$ & $\Gai$ with vertices $\{V_2, V_{13},V_{12}, V_{23}, u_1, u_3$\}\\\hline 
      %$(12,13), (V_3,23)$ & $G_8$ with vertices $\{1,3,V_1,23,V_2,12\}$
    \end{tabular}
    \caption{The four cases in which there  exists only one of the eight lilac edges and the forbidden subgraphs we obtain.}
    \label{1}
\end{table}

Notice first that there must exist at  least two lilac edges, and one of them is either $(V_1, V_{12})$ or $(V_{12},V_{13})$ (analogously, we must have either $(V_3, V_{23})$ or $(V_{13}, V_{23})$). This means that if there are exactly two lilac edges, we only have three cases, all of which lead to forbidden subgraphs as we can see in Table \ref{2aristas}.
\begin{table}[h!]
    \centering
    \begin{tabular}{|c|c|} \hline
    lilac edges & forbidden subgraph \\ \hline
     $(V_1,V_{12}), (V_{13}, V_{23})$    & $\Gah$ with vertices $\{u_1,u_3,V_3,V_{23},V_2,V_{12}\}$  \\
      $(V_1,V_{12}), (V_3,V_{23})$   & $\Gac$ with vertices $\{u_3, V_1,V_{23},V_2,V_{12},V_{13}\}$ \\
      $(V_{12},V_{13}), (V_{13},V_{23})$ & $\Gaa=(V_1,V_{23},u_2,V_{12},V_3)$ \\ \hline 
      %$(12,13), (V_3,23)$ & $G_8$ with vertices $\{1,3,V_1,23,V_2,12\}$
    \end{tabular}
    \caption{The three cases in which there exist only two of the eight lilac edges and the forbidden subgraphs we obtain.}
    \label{2aristas}
\end{table}

This means that there exist at least three of the eight lilac edges. Notice that the three edges cannot be one of the following sets: $$\{(V_1, V_{12}),(V_{13},V_{23}),(V_2,V_{23}) \}, \{(V_1,V_{12}),(V_3,V_{23}), (V_{12},V_{13})\}, \{(V_{12},V_{13}),(V_{13},V_{23}), (V_3,V_{23})\}$$ nor $\{(V_{12},V_{13}),(V_{13},V_{23}),(V_1,V_{12})\}.$ Thus the remaining sets of three edges that we need to consider are shown in Table \ref{3aristas}.
\begin{table}[h!]
    \centering
    \begin{tabular}{|c|c|} \hline
    Three lilac edges & forbidden subgraph \\ \hline
       $(V_{12},V_{13}),(V_{13},V_{23}),(V_{12},V_{23})$  & allowable \\
        $(V_1,V_{12}), (V_3,V_{23}), (V_{12},V_2)$ & $\Gaa=(V_{23},u_3,u_1,V_{12},V_2)$ \\
        $(V_1,V_{12}),(V_{13},V_{23}),(12,23)$ & $\Gan$ with vertices $\{u_1,u_3,V_{12},V_1,V_2,V_{13}\}$ \\
        $(V_1,V_{12}),(V_{13},V_{23}),(V_{12},V_2)$ & $\Gaa=(V_{23},u_3,u_1,V_{12},V_2)$ \\ \hline
        
    \end{tabular}
    \caption{The four cases in which there are exactly three of the eight edges}
    \label{3aristas}
\end{table}

    % \href{https://sagecell.sagemath.org/?z=eJyFlf1vmkAYx3838X-4tT8AKTE7WN_MiO20Yrq02ZKNvThmEQ4kVc6AtllM97fvOQ7lTs4OE4H7fu55ByISo5AulusVGd7prtFttxAcx2hI82kaRSRDSR4sZwWKab5Adz_RewfZHFqk2SQusVNn7DJK1wYfEs1EWbAgyEHap1PN8Pfhsx1807t3Bdq1NMPkNDu20NXtNxGylVDvXoLeHYCeRehUDfkSdKaO6UGCztXQDymmCzX0VYIulZArZ4ffKqnrOykojNUOPZlSF73_XabUVb_6LcelLvstSSTqQN0plWZBnWP_80Si1Dl-lHuIzxRjeL4bw2GvH9c84Mp2DnuDv1SklP0c9h6u5AiV9QVsIKerLDBgU6l2lrLCLDQpgUtFvhe7fF2IsScHWT-n_D-eBwkoX_I1abeqJZqjEUoz8bFHJ-JzLd2dS3cX3TroARh2O8V6Wr5YJgUJ8nCmj0ywHa1DEjnMq1HzaQxb3jjonmZEMMOO42WeZit91GGZ6Aa4PILfCSpWuT7oPJF8lYak0Auar7hVw5ANyPv3xEGnmNFnfW-1Ks0wmBdEVqY5CR75EsTMOCFc7unoej6nzyTiL9UjgxW33fKw6VmmZzuahzVT8yz2Z2vtVuLwlm083B0Dgm3Tsk3sm9jqjuEW9viwFTQQfNC64_LCgwtgQRQixGyRreGKgn1wB6v-CwQyT-dBCImNdQgHW4apM4fV2bLhDK49u1pnZ_BQ6rjSy-i2c8RnKWKflxkJH_UwKEix_bywWWILbJxKoVEorQ_LXQRDynShBckkpMs_EGfSYRd6Q-oEUTQhUQJ9H5PSU-mGpwd9ISijq61jX9hefwa5IaMR069MKzsWp0-Eu2D12hwoxIu5OVBKlcKL-yI0rMm8ahdXyl7Ttopkd4_Bzd22vLup4FfsYrthF_9P2Q0VV_yyznSdi3XetyLssV7zUxPC47CpxYPmVETdaBYkH-46UOMfnwE0nQ==&lang=sage&interacts=eJyLjgUAARUAuQ==}{A code to compute allowable graphs} {\color{violet} este codigo va en la siguiente seccion}
  %{\color{purple} Hasta ahora hay 51 casos posibles, me gustaria que quedara el dibujo de todos ellos pero tal ves sean demaciados (igual y se reducen por simetría)}
\end{proof}

\begin{remark}\label{VtVi}

    The set $V_t$ must be adjacent to at least one of the sets $V_1, V_2$ or $V_3$ since otherwise we obtain the path $(V_t,3,V_3,V_2,V_1).$
 \end{remark}

 \begin{remark}
         If there are no edges between $V_{13}$ and $V_2$ then $V_{13}$ must be either a complete subgraph or an independent set since otherwise we obtain either $G_{13}$ or $G_4.$
     If there are no edges between $V_{12}$ and $V_1\cup V_2$ then $V_{12}$ must be a independent set since otherwise we obtain a $G_4$. Analogously, if there are no edges between $V_{23}$ and $V_2\cup V_3,$ then $V_{23}$ must be an independent set. If there are no edges between $V_{12}$ and $V_2$ but there exists every edge between $V_1$ and $V_{12}$, then $V_{12}$ must be a complete subgraph. Analogously for $V_{23}.$
 \end{remark}

 \begin{lemma} \label{3a}
     Assume there are only edges between the sets $V_{12}, V_{13}$ and $V_{23}$.  Then $G$  is either as in Figure \ref{3} where each lilac vertex is an independent set, each dark pink (magenta) is a complete subgraph and each orange vertex is either a complete or an independent set, or, the graph is as in the right side of Figure \ref{3}, with the following differences:
     Let $n(V_t)$ denote the number of sets ($V_i, V_{ij},\dots,$ etc) that are adjacent to $V_t$
     \begin{itemize}
         \item  $n(V_t)\geq 5$ with $V_{12}, V_{23}\subset N(V_t),$
         \item if $n(V_t)=9$ then $V_t$ may take any form (without forbidden subgraphs) 
         \item if $n(V_t)<9$ then $V_t$ does not contain as induced subgraphs the graphs $P_4, \ltimes, P_3\cup P_2, dart, 3K_2$ if $n(V_t)=8.$   
     \end{itemize}

     %Then $V_t$ must be adjacent to the sets $(12), (23), V_1$ and $V_3$, the set $V_2$ must be a complete subgraph and the sets $(12)$ and $(23)$ must be independent sets. The set $(13)$ is either a complete subgraph or an independent set. and the set $V_t$ does not contain as an induced subgraph $K_2\cup 2K_1$. and it can additionally be adjacent to $V_2$ and/or $(13).$ 

 \end{lemma}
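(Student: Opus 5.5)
Throughout, I work in the setting of Remark~\ref{rem1}: each $V_i$ is non-empty, $V_2$ is completely joined to $V_1$ and to $V_3$, and there are no edges between $V_1$ and $V_3$. I also take from the proof of Theorem~\ref{aristas} the forced structure:
\begin{itemize}
\item $V_1$ and $V_3$ are cliques;
\item $V_2$ is a clique or an independent set;
\item $V_{13}$ is complete to $V_1\cup V_3$;
\item $V_{23}$ is complete to $V_1$, and $V_{12}$ is complete to $V_3$.
\end{itemize}
The hypothesis of Lemma~\ref{3a} says the only lilac edges present are the three among $V_{12},V_{13},V_{23}$. By Theorem~\ref{aristas} this is the unique admissible configuration with exactly three lilac edges.

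\textbf{Step 1: the sets other than $V_t$.} I fix this skeleton and use the preceding Remark to settle each set.
\begin{itemize}
\item Since $V_{12}$ has no edges to $V_1\cup V_2$, it is independent.
\item Likewise $V_{23}$ is independent. These give the lilac vertices of Figure~\ref{3}.
\item Since $V_{13}$ has no edges to $V_2$, it is a clique or an independent set. This is an orange vertex.
\item $V_2$ should be forced to be a clique: the edges $(V_2,V_{12})$ and $(V_2,V_{23})$ are absent, so the $\Gae$ argument from Theorem~\ref{aristas} applies.
\item Since two of the $V_i$ are non-empty, $U$ is independent by the lemma on $U$. I also check, using the $P_4$ given by $v_1,u_1,u_3,v_3$, that $U$ has no neighbours in any $V_{ij}$. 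Otherwise an induced $\Gaa$ or $\Gau$ appears, by short checks analogous to those in Section~\ref{sec:mz<3g>=4}.
\end{itemize}
I also verify the complete-or-empty behaviour between each pair of sets. If two vertices of one set had different neighbourhoods in another, one picks representatives, and the resulting 6-vertex graph should be one of $\Gac,\Gad,\Gaf,\Gan$ in Figure~\ref{fig:FG}. This yields the left picture of Figure~\ref{3} when $V_t=\emptyset$.

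\textbf{Step 2: adjacencies of $V_t$.} Remark~\ref{VtVi} already gives that $V_t$ meets $V_1\cup V_2\cup V_3$. Next I show $V_t$ is complete to $V_{12}$ and $V_{23}$. If $v_t$ were non-adjacent to some $v_{12}$, then $v_{12},u_1,v_t$ together with suitable vertices of $V_3$ and $V_{23}$ should produce $\Gaa$ or $\Gah$. I would test this by brute force on the finitely many 6-vertex configurations. After that, I enumerate the remaining choices: for each of $V_1,V_2,V_3,V_{13},U$ and the triangle vertices, $V_t$ is either adjacent or not. Each rejected pattern must be exhibited with a forbidden subgraph in a table, in the style of Tables~\ref{1}--\ref{3aristas}. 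This should leave $n(V_t)\ge 5$.

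\textbf{Step 3: the internal structure of $V_t$.}
\begin{itemize}
\item If $n(V_t)=9$, then $V_t$ is joined to everything else. A dominating set of vertices does not restrict its interior beyond $G[V_t]$ itself being $\Forbmz{3}$-free, which gives "any form".
\item If $n(V_t)=8$, there is exactly one non-neighbour set $S$. Taking one vertex of $S$ together with a neighbour, I show that any copy of $P_4,\ltimes,{\sf dart},P_3\cup K_2$ or $3K_2$ inside $V_t$ extends to a member of Figure~\ref{fig:FG}. This is exactly the list in Theorem~\ref{lemma:connectedmzandmrandgamma=2}, so equivalently I would use monotonicity of $\mz$. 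Adding a vertex $x$ that is non-adjacent to all of $V_t$, together with one more vertex, should raise $\mz$ by one. So $\mz(G[V_t])\le 2$ is needed, and I invoke that theorem.
\end{itemize}

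\textbf{Main obstacle.} The hardest part is Step 3 for $n(V_t)<8$, together with the sheer case count in Step 2. The statement itself is vague there. I would first try the monotonicity argument: exhibit an extra induced structure outside $V_t$ that contributes enough forcing steps to bound $\mz(G[V_t])$. If that fails, I would fall back on a computer enumeration over representatives, as in Section~\ref{sec:allowable}.
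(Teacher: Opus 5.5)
Your overall strategy (settle each set by forbidden induced subgraphs, then classify $N(V_t)$ by a table of rejected patterns) is the paper's. However, Step~3 contains a false claim and misses what the lemma actually asserts about the pictured configurations.

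For $n(V_t)=9$ you argue that a set joined to everything else is restricted only by $G[V_t]$ being $\Forbmz{3}$-free. This is false, because joins raise $\mz$. Indeed $V_t$ is always complete to $u_1$. If $G[V_t]\cong P_4\cup K_1$ (so $\mz(G[V_t])=3$), then $G[V_t\cup\{u_1\}]\cong (P_4\cup K_1)\vee K_1$, which is exactly $\Gal$: an end of the $P_4$ together with $u_1$ is a zero forcing set, so $\mz=4$.

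Your monotonicity argument for $n(V_t)\le 8$ is sound. A vertex $x$ of a missed set, plus a triangle vertex adjacent to $x$, raises $\mz$ by one, so $\mz(G[V_t])\le 2$ and Theorem~\ref{lemma:connectedmzandmrandgamma=2} applies. This is more than the paper writes down. But it is far weaker than the conclusion in Figure~\ref{3}, where $V_t$ is a clique in both pictures and $V_{13}$ is a clique in the left one.

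You leave $n(V_t)<8$ open. In Step~1 you also misread the left picture as the case $V_t=\emptyset$ with $V_{13}$ merely clique-or-independent; in fact $V_t$ is present there, adjacent to $V_1,V_3,V_{12},V_{23}$. The missing idea is local, as in the paper:
\begin{itemize}
\item If $V_t$ misses $V_2$ and $V_{13}$, two non-adjacent $a,b\in V_t$ with $u_1,u_2,v_{13},v_2$ induce a diamond with a pendant at each degree-3 vertex, i.e.\ $\Gaf$.
\item In the same configuration, two non-adjacent $x,x'\in V_{13}$ with $v_t,v_2,u_1,u_2$ induce the triangle $u_1u_2v_t$ with pendants $x,x'$ at $u_1$ and $v_2$ at $u_2$, i.e.\ $\Gad$.
\item In the right picture, $a,b$ with $u_1,u_3,v_1,v_3$ again give $\Gaf$.
\end{itemize}

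Step~2 has a second gap: no uniform witness of the kind you propose exists. Suppose $v_t$ is adjacent to $V_3$ and $V_{23}$ but not to $v_{12}$. Then $v_{12}$ and $v_t$ have exactly the same neighbours among $u_1$, $V_3$ and $V_{23}$. So any set made of $v_{12},u_1,v_t$ and vertices of $V_3\cup V_{23}$ contains a pair of non-adjacent twins, which neither $P_5$ nor $\Gah$ has. For instance, $\{v_{12},v_t,u_1,v_3,v_{23}\}$ induces $K_{2,3}$, with $\mz=2$.

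Such a $V_t$ must still be excluded, but the witness uses other sets and depends on the rest of $N(V_t)$. When $V_t$ misses only $V_{12}$, the paper takes $\{v_2,v_1,v_t,u_1,u_3,v_{12}\}$, which induces $\Gan$. This neighbourhood-dependent case table (Table~\ref{caso3a} and the argument following it) is exactly what you defer to an unperformed brute-force search. So the central conclusion $V_{12},V_{23}\subseteq N(V_t)$ with $n(V_t)\ge 5$ is not actually proved in your proposal.
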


\begin{figure}
 	\begin{center}
 		\begin{tikzpicture}[scale=1.8,thick]
 		\tikzstyle{every node}=[minimum width=0pt, inner sep=2pt, circle]
 			\draw (0.15,0.47) node[draw, fill=magenta] (0) { \tiny $V_t$};
 			\draw (-0.82,0.36) node[draw] (1) { \tiny 1};
 			\draw (0.75,0.15) node[draw] (2) { \tiny 2};
 			\draw (-0.23,1.36) node[draw] (3) { \tiny 3};
 			\draw (-1.48,-0.12) node[draw, fill=magenta] (4) { \tiny $V_1$};
 			\draw (1.2,-0.1) node[draw, fill=magenta] (5) { \tiny $V_2$};
 			\draw (-0.04,2.19) node[draw, fill=magenta] (6) { \tiny $V_3$};
 			\draw (-0.77,0.9) node[draw, fill=magenta] (7) { \tiny $V_{13}$};
 			\draw (0.81,1.1) node[draw, fill=Orchid] (8) { \tiny $V_{23}$};
 			\draw (-0.21,-0.34) node[draw, fill=Orchid] (9) { \tiny $V_{12}$};
 			\draw  (0) edge (1);
 			\draw  (0) edge (2);
 			\draw  (0) edge (3);
 			\draw  (1) edge (2);
 			\draw  (1) edge (3);
 			\draw  (2) edge (3);
 			\draw  (3) edge (6);
 			\draw  (1) edge (4);
 			\draw  (2) edge (5);
 			\draw  (5) edge (6);
 			\draw  (4) edge (5);
 			\draw  (3) edge (7);
 			\draw  (1) edge (7);
 			\draw  (4) edge (7);
 			\draw  (6) edge (7);
 			\draw  (3) edge (8);
 			\draw  (2) edge (8);
 			\draw  (1) edge (9);
 			\draw  (2) edge (9);
 			\draw  (6) edge (9);
 			\draw  (4) edge (8);
 			\draw  (7) edge (8);
 			\draw  (8) edge (9);
 			\draw  (7) edge (9);
 			\draw  (0) edge (9);
 			\draw  (0) edge (8);
 			\draw  (0) edge (6);
 			\draw  (0) edge (4);

             \draw (3.68,0.42) node[draw, fill=magenta]  (0x) { \tiny $V_t$};
 			\draw (3.5-0.75,0.08) node[draw] (1x) { \tiny $u_1$};
 			\draw (4.5,0.12) node[draw]  (2x) { \tiny $u_2$};
 			\draw (3.55,1.42) node[draw]  (3x) { \tiny $u_3$};
 			\draw (3.5-1.58,-0.17) node[draw, fill=magenta](4x) { \tiny $V_1$};
 			\draw (4.8,-0.18) node[draw, fill=magenta]  (5x) { \tiny $V_2$};
 			\draw (3.45,2.19) node[draw, fill=magenta] (6x) { \tiny $V_3$};
 			\draw (3.5-0.77,0.9) node[draw, fill=orange]  (7x) { \tiny $V_{13}$};
 			\draw (4.5,0.92) node[draw, fill=Orchid]  (8x) { \tiny $V_{23}$};
 			\draw (3.5-0.12,-0.47) node[draw, fill=Orchid](9x) { \tiny $V_{12}$};
 			\draw  (0x) edge (1x);
 			\draw  (0x) edge (2x);
 			\draw  (0x) edge (3x);
 			\draw  (1x) edge (2x);
 			\draw  (1x) edge (3x);
 			\draw  (2x) edge (3x);
 			\draw  (3x) edge (6x);
 			\draw  (1x) edge (4x);
 			\draw  (2x) edge (5x);
 			\draw  (5x) edge (6x);
 			\draw  (4x) edge (5x);
 			\draw  (3x) edge (7x);
 			\draw  (1x) edge (7x);
 			\draw  (4x) edge (7x);
 			\draw  (6x) edge (7x);
 			\draw  (3x) edge (8x);
 			\draw  (2x) edge (8x);
 			\draw  (1x) edge (9x);
            \draw  (2x) edge (9x);
 			\draw  (6x) edge (9x);
 			\draw  (4x) edge (8x);
 			\draw  (7x) edge (8x);
 			\draw  (8x) edge (9x);
 			\draw  (7x) edge (9x);
 			\draw  (0x) edge (5x);
 			\draw  (0x) edge (9x);
 			\draw  (0x) edge (8x);
 		\end{tikzpicture}
 	\end{center} \caption{The graph $G$ when there are only edges between the sets $V_{12},V_{13}$ and $V_{23}.$} \label{3}
    \end{figure}

 \begin{proof}
 
 From Table \ref{caso3a}, we can see that the options for $N(V_t)$ with $|N(V_t)|<8$ are either, $\{V_2, V_{12}, V_{23}\}\cup \{1,2,3\}$, or a set containing $V_1$ and $V_3$ in which case, it must also contain $V_{12}$ and by symmetry, $V_{23}.$ 
 Now assume that $d(V_t)=8,$ if either $V_{12}$ or $V_{23}$ is not adjacent to $V_t$, we obtain the forbidden subgraph $G_{14}$ formed by the vertices $V_2,V_1,V_t,1,3,V_{12}$ or $V_2,V_3,V_t,2,1,V_{23}$ respectively.
 \begin{table}[h!]
         \centering
         \begin{tabular}{|c|c|} \hline
             $N(V_t)\setminus\{u_1,u_2,u_3\}$ & Forbidden subgraph  \\ \hline
            $V_2$  & $\Gaa=(V_t,u_2,V_{12},V_{13},V_i), i=1,3$ \\
            $V_3$ or $V_2,V_3$ & $\Gaa=(V_t, u_2, V_{23}, V_{13}, V_1)$ \\
           $V_2,V_{13}$ & $\Gaa=(V_t,V_2,V_3,V_{12},V_{23}$ \\
            $V_1,V_2,V_3$ & $\Gaa=(V_2,V_t,u_1,V_{1i},V_{23}), i=2,3$ \\
            $V_1,V_3$ & $\Gah[V_t,u_1,V_2,V_3,V_{12},V_{23}]$ \\
            $V_2,V_{12},V_{23},V_{13}$ & $\Gan[V_t,u_2,u_3,V_{13},V_1,V_3]$ \\
            $V_1,V_2,V_{12},V_{23}$ & $\Gaa=(u_2,V_t,V_1,V_{13},V_3)$ \\ \hline
         \end{tabular}
         \caption{Some non-possible options for $N(V_t).$}
         \label{caso3a}
     \end{table}
     If there are no edges between $V_1$  and $V_t$ we obtain the path $(V_t, u_2, V_{23}, V_{13}, V_1)$ which means that $V_t$ is adjacent to both $V_1$ and $V_3$, or is additionally adjacent to $V_{23}$ and or $V_{13}.$ If $V_t$ is adjacent to $V_1$, $V_2$ and $V_3$ we obtain the path $(V_2,V_t,u_1,V_{13},V_{23})$ and if $V_t$ is only adjacent to $V_1$ and $V_3$ we obtain the forbidden subgraph $G_4$ formed by the vertices $V_t, u_1, V_{13}, V_3, V_2, V_{23}.$ This means that $V_t$ is either adjacent to $V_{23}$ or to both $V_{13}$ and $V_2.$ If $V_t$ is adjacent to $V_2$ and $V_{13}$ we obtain the path $(V_2, V_t,u_1,V_{12},V_{23})$ thus $V_t$ must be adjacent to $V_{23}$ an analogously by symmetry, $V_t$ must be adjacent to $V_{12}.$ Hence, $V_t$ is adjacent to $V_{12},V_{23},V_1, V_3$ or $d(V_t)\geq 5$ in which case $V_{12}, V_{23} \in N(V_t)$ since otherwise we obtain the forbidden subgraph $G_6.$

     Notice that $V_2$ must be a complete subgraph since otherwise we obtain the forbidden subgraph $G_2$ formed by two non adjacent vertices of $V_2$ together with the vertices $V_3, u_1, V_{23}.$ The sets $V_{12}$ and $V_{23}$ must be independent since otherwise we obtain the forbidden subgraph $G_7$ formed by two adjacent vertices of $V_{12}$ together with the vertices $u_2,u_3,V_2,V_1$.

     If $V_t$ is not adjacent to $V_2$ nor to $V_{13}$, then $V_t$ must be a complete subgraph since otherwise we obtain the forbidden subgraph $G_6$ formed by two non adjacent vertices of $V_t$ together with $V_{13}, V_2, u_1,u_2$, and in this case $V_{13}$  must also be a complete subgraph since otherwise we obtain the forbidden subgraph $G_3$ formed by two non-adjacent vertices of $V_{13}$ together with $V_t, V_2,u_1,u_2.$
 \end{proof}

\subsection{Computing Allowable Graphs}\label{sec:allowable}
%{\color{red} A partir de aquí otra vez se usa la notación $12$ para el conjunto $V_{12}$ etc, en el texto es facil cambiarlo, pero no se en las figuras.}
%{\color{magenta} Ya cambie la notacion en las figuras.} {\color{violet} excelente, entonces lo cambio en el texto}

The above procedures can be accelerated using any programming language. 
Just by inputting the family $\mathcal{F}$ of 25 forbidden graphs (see Figure~\ref{fig:FG}) and searching among the graphs which are free of that set of graphs.
In this section we explain how the (algorithmic) procedure works and its results.

The idea is pretty simple and is the following: build every possible configuration of edges and check if that configuration contains a forbidden graph. 
If not, then it is considered as \texttt{allowable}. 
In order to accelerate the process, we build algorithms to run all combinations in parallel. 

Starting with a triangle, we labeled the set of these vertices as $u_1,u_2,u_3$.
With those vertices we considered the following set of vertices: the set of neighbors of vertex $u_i$, labeled them as $V_i$, the set of vertices adjacent to $u_i$ and $u_j$ and labeled them as $V_{ij}$ and finally, the set of vertices adjacent to the three vertices $u_1,u_2$, and $u_3$ and set it up as $V_t$. 
Of course, each of these sets of vertices may or may not exist and they (the vertices) may or may not be adjacent to other vertices. 
Whenever a set of vertices, say $V_1$, exist and adjacencies of it, it may or may not appear one of the forbidden graphs (see Figure \ref{fig:FG}) which tells us which non isomorphic \emph{configurations} from the initial triangle are admissible. 
The following is a pseudo-code that builds each configuration from the initial triangle and computes whether the configuration is allowed or not.

The procedure, ``IS-ALLOWABLE'', returns {\color{blue}\texttt{TRUE}} if the input $H$ contains any graph from the family $\mathcal{F}$ as an induced subgraph.
The second procedure, ``BUILD'', builds from the input set of desired vertices $N$, a graph built from a triangle and the vertices $N$ and creating the mandatory adjacencies (so for example if the vertex $V_{12}\in N$, then $V_{12}$ is adjacent to vertices $u_1$ and $u_2$. %{\color{red} are these vertices $u_{1}$ and $u_{2}$? Also, the notation for the vertices of the 3-cycle should be standardized.}).
The third procedure, ``POSSIBLE-EDGES'' returns the set of all possible edges that may occur within a configuration $G$ avoiding the not allowed edges in $G$ (as an example, $V_{1}$ can not be adjacent to vertex $u_2$).
For the second part of the algorithm, we get a family of allowable graphs when $k$ (fixed) vertices are added. 
To do so, the procedure ``LOCAL-FAMILY'' gets as an input a fix number $k$, gets all combinations of size $k$ of the set $\lbrace V_{12},V_{13},V_{23},V_1,V_2,V_3,V_t\rbrace$  and for each of those combinations, builds a graph $G$ using the procedure BUILD and then computes all possible combinations via the procedure POSSIBLE-EDGES to filter out which of those combinations are forbidden. 
Finally, the output is a set of all possible and allowed combinations when adding $k$ vertices to the triangle.
The last procedure, ``ALLOWABLE-GRAPHS'' just runs the procedure LOCAL-FAMILY for each $k$ and stores all allowable graphs. 
This results in a total of around 2000 graphs. 
However, many of these graphs share twin vertices.
After filtering these graphs out, we obtained a total of 449 non isomorphic allowable graphs.

\subsection{Blowing Up Allowable Graphs}

With the set of 449 graphs there is still one missing possibility. 
One could ``blow-up'' vertices into either complete graphs or independent sets with the rule that every new vertex is adjacent to all vertices that are originally adjacent to the blown-up vertex. 
The question is if this procedure can produce allowable graphs and if it could, what ``blown-up'' configurations are allowable. 
To check complete graphs, a vertex is replaced with a set of two non adjacent vertices and test if the new graph is no longer allowable. 
Similarly, to check allowable independent sets, a vertex is replaced with an edge and test if the new graph is no longer allowable. 
Nevertheless, it could be the case that independent sets and complete graphs keep the graph allowable. 
The next step is to search for more forbidden graphs within the process of blowing up vertices. 
By Theorem~\ref{thm_mz_fin}, we just have to search within the set of graphs of size eight. 
It turns out that the only vertex that in some cases admits complete graphs and independent sets is the vertex $V_t$.

\bigskip

\begin{lstlisting}[mathescape=true, basicstyle=\ttfamily, numbers=left, commentstyle=\itshape]
Procedure IS-ALLOWABLE($H$)  // Whether $H$ contains a forbidden induced subgraph
    $\mathcal{F} \gets$ Family of forbidden graphs \ref{fig:FG}
    For $G \in \mathcal{F}$
        If $G$ is an induced subgraph of $H$
            return FALSE
        EndIf
    EndFor
    return TRUE
EndProcedure

Procedure BUILD($N$)  // Adds vertices from $N$ and its adjacencies
    Let $G$ be the triangle with vertices $u_{1}, u_{2}$, and $u_{3}$.
    $V(G) \gets V(G) \cup N$.
    If $V_{12} \in N$
        $E(G) \gets E(G) \cup \{(V_{12},u_{1}), (V_{12},u_{2})\}$
    EndIf
    If $V_{13} \in N$
        $E(G) \gets E(G) \cup \{(V_{13},u_{1}), (V_{13},u_{3})\}$
    EndIf
    If $V_{23} \in N$
        $E(G) \gets E(G) \cup \{(V_{23},u_{2}), (V_{23},u_{3})\}$
    EndIf
    If $V_1 \in N$
        $E(G) \gets E(G) \cup \{(V_1,u_{1})\}$
    EndIf
    If $V_2 \in N$
        $E(G) \gets E(G) \cup \{(V_2,u_{2})\}$
    EndIf
    If $V_3 \in N$
        $E(G) \gets E(G) \cup \{(V_3,u_{3})\}$
    EndIf
    If $V_t \in N$
        $E(G) \gets E(G) \cup \{(V_t,u_{1}), (V_t,u_{2}), (V_t,u_{3})\}$
    EndIf
    return $G$.
EndProcedure

Procedure POSSIBLE-EDGES($G$)  // From all possible edges, there are some that are not allowed
    $F \gets \{(V_1,u_{2}), (V_1,u_{3}), (V_2,u_{1}), (V_2,u_{3}), (V_3,u_{1}), (V_3,u_{2}), (V_{12},u_{3}), (V_{13},u_{2}), (V_{23},u_{1})\}$
    $E \gets E(G)$  // Set of edges of $E$
    $K \gets E(K_{|G|})$  // Set of all possible edges of a graph of the size of $G$
    return $K \setminus (E \cup F)$.
EndProcedure
\end{lstlisting}

\begin{lstlisting}[mathescape=true, basicstyle=\ttfamily, numbers=left, commentstyle=\itshape, caption={Allowable Local Families}, label={alg:allowable1}]
Procedure LOCAL-FAMILY($k$)  // Returns the admissible graphs when $k$ vertices are added to the triangle.
    $V \gets \{V_{12}, V_{13}, V_{23}, V_1, V_2, V_3, V_t\}$
    $L \gets \emptyset$  // Local family set
    $\mathcal{V} \gets \{$All combinations of size $k$ of $V\}$
    For $N \in \mathcal{V}$
        $G \gets$ BUILD($N$)
        $E \gets$ POSSIBLE-EDGES($G$)
        $m = |E|$
        For $0 \leq s \leq m$
            $\mathcal{E} \gets \{$All combinations of size $s$ of $E\}$  // Consider each combination of size $s$ from all $m$ possible edges that can be added.
            For $E' \in \mathcal{E}$
                $H \gets G$  // A copy of $G$
                $E(H) \gets E(H) \cup E'$
                If IS-ALLOWABLE($H$)
                    $L \gets L \cup H$
                EndIf
            EndFor
        EndFor
    EndFor
    return $L$
EndProcedure
\end{lstlisting}

\begin{lstlisting}[mathescape=true, basicstyle=\ttfamily, numbers=left, commentstyle=\itshape, caption={Allowable Graphs}]
Procedure ALLOWABLE-GRAPHS()
    $L \gets \emptyset$
    For $0 \leq k \leq 7$
        $L' \gets$ LOCAL-FAMILY($k$)
        $L \gets L' \cup L$
    EndFor
    return $L$
EndProcedure
\end{lstlisting}

\section*{Acknowledgments}
C.A. Alfaro was partially supported by SNI and CONACyT.  Jephian C.-H. Lin was partially supported by NSTC grant 113-2115-M-110-010-MY3. Teresa I. Hoekstra-Mendoza was partially supported by SNI 829061. Juan Pablo Serrano was supported by CONAHCyT, grant number 1174526.

\appendix

\section{Allowed Families}

In this section we present the families of allowed graphs with girth three and mz at most 3.

A magenta vertex represents a clique, a yellow vertex is an independent set, a blue vertex is a graph free of $\lbrace K_2\cup K_1\cup K_1,P_3\cup K_1, P_4\rbrace$ and a green vertex represents a graph free of the 11 graphs shown in~\ref{fig:greenvertex}.

\begin{figure}[h]
\centering
\begin{tabular}{cccc}
\begin{tikzpicture}[scale=0.5,thick]
\tikzstyle{every node}=[minimum width=0pt, inner sep=2pt, circle]
    \draw (0:1) node[draw] (0) {};
    \draw (72:1) node[draw] (2) {};
    \draw (144:1) node[draw] (3) {};
    \draw (216:1) node[draw] (1) {};
    \draw (288:1) node[draw] (4) {};
    \draw  (0) edge (2);
    \draw  (0) edge (4);
\end{tikzpicture}
&
\begin{tikzpicture}[scale=0.5,thick]
\tikzstyle{every node}=[minimum width=0pt, inner sep=2pt, circle]
    \draw (0:1) node[draw] (0) {};
    \draw (72:1) node[draw] (2) {};
    \draw (144:1) node[draw] (3) {};
    \draw (216:1) node[draw] (1) {};
    \draw (288:1) node[draw] (4) {};
    \draw  (0) edge (2);
    \draw  (0) edge (4);
    \draw  (1) edge (4);
\end{tikzpicture}
&
\begin{tikzpicture}[scale=0.5,thick]
\tikzstyle{every node}=[minimum width=0pt, inner sep=2pt, circle]
    \draw (0:1) node[draw] (0) {};
    \draw (72:1) node[draw] (2) {};
    \draw (144:1) node[draw] (3) {};
    \draw (216:1) node[draw] (1) {};
    \draw (288:1) node[draw] (4) {};
    \draw  (0) edge (2);
    \draw  (0) edge (4);
    \draw  (1) edge (4);
    \draw  (1) edge (3);
\end{tikzpicture}
&
\begin{tikzpicture}[scale=0.5,thick]
\tikzstyle{every node}=[minimum width=0pt, inner sep=2pt, circle]
    \draw (0:1) node[draw] (0) {};
    \draw (60:1) node[draw] (1) {};
    \draw (120:1) node[draw] (2) {};
    \draw (180:1) node[draw] (3) {};
    \draw (240:1) node[draw] (4) {};
    \draw (300:1) node[draw] (5) {};
    
    \draw  (0) edge (1);
    \draw  (2) edge (3);
\end{tikzpicture}\\

%$\tiny P_3\cup K_1\cup K_1$&$\tiny P_4\cup K_1$&$\tiny P_5$&$\tiny K_2\cup K_2\cup K_1\cup K_1$\\\\

\begin{tikzpicture}[scale=0.5,thick]
\tikzstyle{every node}=[minimum width=0pt, inner sep=2pt, circle]
    \draw (0:1.5) node[draw] (0) {};
    \draw (40:1) node[draw] (1) {};
    \draw (150:1) node[draw] (2) {};
    \draw (210:1) node[draw] (3) {};
    \draw (0:0) node[draw] (4) {};
    \draw (320:1) node[draw] (5) {};
    
    \draw  (1) edge (4);
    \draw  (2) edge (4);
    \draw  (2) edge (3);
    \draw  (3) edge (4);
    \draw  (4) edge (5);
\end{tikzpicture}
&
\begin{tikzpicture}[scale=0.5,thick]
\tikzstyle{every node}=[minimum width=0pt, inner sep=2pt, circle]
    \draw (200:1) node[draw] (0) {};
    \draw (80:1) node[draw] (1) {};
    \draw (150:2) node[draw] (2) {};
    \draw (150:1) node[draw] (3) {};
    \draw (0:0) node[draw] (4) {};
    \draw (330:1) node[draw] (5) {};
    
    \draw  (1) edge (4);
    \draw  (1) edge (3);
    \draw  (0) edge (4);
    \draw  (0) edge (3);
    \draw  (2) edge (3);
    \draw  (3) edge (4);
    \draw  (4) edge (5);
\end{tikzpicture}
&
\begin{tikzpicture}[scale=0.5,thick]
\tikzstyle{every node}=[minimum width=0pt, inner sep=2pt, circle]
    \draw (320:2) node[draw] (0) {};
    \draw (40:1) node[draw] (1) {};
    \draw (150:1) node[draw] (2) {};
    \draw (210:1) node[draw] (3) {};
    \draw (0:0) node[draw] (4) {};
    \draw (320:1) node[draw] (5) {};
    
    \draw  (1) edge (4);
    \draw  (2) edge (4);
    \draw  (2) edge (3);
    \draw  (3) edge (4);
    \draw  (4) edge (5);
    \draw  (0) edge (5);
\end{tikzpicture}
&
\begin{tikzpicture}[scale=0.5,thick]
\tikzstyle{every node}=[minimum width=0pt, inner sep=2pt, circle]
    \draw (320:2) node[draw] (0) {};
    \draw (80:1) node[draw] (1) {};
    \draw (150:1) node[draw] (2) {};
    \draw (210:1) node[draw] (3) {};
    \draw (0:0) node[draw] (4) {};
    \draw (320:1) node[draw] (5) {};
    
    \draw  (1) edge (4);
    \draw  (1) edge (2);
    \draw  (2) edge (4);
    \draw  (2) edge (3);
    \draw  (3) edge (4);
    \draw  (4) edge (5);
    \draw  (0) edge (5);
\end{tikzpicture}
\\\\
\begin{tikzpicture}[scale=0.5,thick]
\tikzstyle{every node}=[minimum width=0pt, inner sep=2pt, circle]
    \draw (90:1) node[draw] (0) {};
    \draw (180:1) node[draw] (1) {};
    \draw (0:1) node[draw] (2) {};
    \draw (135:1.5) node[draw] (3) {};
    \draw (45:1.5) node[draw] (4) {};
    \draw (270:1) node[draw] (5) {};
    
    \draw  (0) edge (1);
    \draw  (0) edge (2);
    \draw  (1) edge (2);
    \draw  (1) edge (3);
    \draw  (2) edge (4);
    \draw  (0) edge (4);
    \draw  (0) edge (3);
    \draw  (1) edge (5);
\end{tikzpicture}
&
\begin{tikzpicture}[scale=0.5,thick]
\tikzstyle{every node}=[minimum width=0pt, inner sep=2pt, circle]
    \draw (180:1) node[draw] (0) {};
    \draw (135:1.5) node[draw] (1) {};
    \draw (45:1.5) node[draw] (2) {};
    \draw (0:1) node[draw] (3) {};
    \draw (160:2) node[draw] (4) {};
    \draw (25:2) node[draw] (5) {};
    
    \draw  (0) edge (1);
    \draw  (1) edge (2);
    \draw  (2) edge (3);
    \draw  (0) edge (3);
    \draw  (0) edge (4);
    \draw  (1) edge (4);
    \draw  (2) edge (5);
    \draw  (3) edge (5);
\end{tikzpicture}
&
\begin{tikzpicture}[scale=0.5,thick]
\tikzstyle{every node}=[minimum width=0pt, inner sep=2pt, circle]
    \draw (90:1) node[draw] (0) {};
    \draw (180:1) node[draw] (1) {};
    \draw (0:1) node[draw] (2) {};
    \draw (135:1.5) node[draw] (3) {};
    \draw (45:1.5) node[draw] (4) {};
    \draw (0:2) node[draw] (5) {};
    
    \draw  (0) edge (1);
    \draw  (0) edge (2);
    \draw  (1) edge (2);
    \draw  (1) edge (3);
    \draw  (2) edge (4);
    \draw  (0) edge (4);
    \draw  (0) edge (3);
    \draw  (2) edge (5);
    \draw  (4) edge (5);
\end{tikzpicture}
\end{tabular}
\caption{Minimal forbidden graphs for a green vertex.}
\label{fig:greenvertex}
\end{figure}

\subsection{A repository}
A file containing the 449 graphs with its forbidden graphs per vertex can be found in the link 

\href{https://github.com/serranoperez/math/blob/main/forbidden_lists.pkl}{https://github.com/serranoperez/math/blob/main/forbidden\_lists.pkl}

\noindent It is a key-value structure with 449 keys and each value is another key-value structure with two keys: ``graph'' and ``forbidden\_list''. The key ``graphs'' contains a SageMathCell version of the allowed graph and ``forbidden\_list'' is another key-value structure with the vertices $12,13,23,V_1,V_2,V_3$, and $V_t$ as keys and each value of the key is a list of the forbidden graphs for that vertex.  
A graph in this file was encoded using the \texttt{Base64} method.
To decode a the graph use the command \texttt{base64.b64decode({\color{blue} string})}. 
To obtain the Graph object in Sage use the command \texttt{pickle.load(base64.b64decode({\color{blue} string}))}

\input{show}

\end{document}